 \def\ps@pprintTitle{%
 	\let\@oddhead\@empty
 	\let\@evenhead\@empty
 	\def\@oddfoot{\footnotesize\itshape
 		{} \hfill\today}%
 	\let\@evenfoot\@oddfoot
 }
\newtheorem{theor}{Theorem}
\newtheorem*{theor*}{Theorem}
\newtheorem{prop}[theor]{Proposition}
\newtheorem{cor}[theor]{Corollary}
\newtheorem*{cor*}{Corollary}
\theoremstyle{definition}               
\newtheorem{defin}[theor]{Definition}
\newtheorem{ex}{Example}
\newtheorem{exs}[ex]{Examples}
\newtheorem{rem}{Remark}
\newtheorem{rems}[rem]{Remarks}
\DeclareMathOperator{\Sym}{Sym}
\DeclareMathOperator{\Aut}{Aut}
\DeclareMathOperator{\End}{End}
\DeclareMathOperator{\id}{id}
\DeclareMathOperator{\im}{Im}
\newcommand{\alphaa}[3]{\alpha^{#1}_{#2}{#3}}
\begin{document}

\begin{frontmatter}
	\title{Affine structures on groups and semi-braces\tnoteref{mytitlenote}}
	\tnotetext[mytitlenote]{This work is partially supported by the Dipartimento di Matematica e Fisica ``Ennio De Giorgi'' - Università del Salento. The author is partially supported by the ACROSS project ARS01\_00702 and is a member of GNSAGA (INdAM).}
	\author[unile]{Paola~STEFANELLI}
	\ead{paola.stefanelli@unisalento.it}
	\address[unile]{Dipartimento di Matematica e Fisica ``Ennio De Giorgi''
		\\
		Universit\`{a} del Salento\\
		Via Provinciale Lecce-Arnesano \\
		73100 Lecce (Italy)\\}

\begin{abstract} 
We introduce affine structures on groups and show they form a category  equivalent to that of semi-braces. In particular, such a new description of semi-braces includes that presented by Rump for braces. 
By specific affine structures, we  provide several instances of bi-skew braces, including some that are not $\lambda$-homomorphic.
Finally, we give a method for determining affine structures on the Zappa product of two groups both endowed with affine structures and prove that such a construction allows for obtaining semi-braces that are not matched product of semi-braces.
\end{abstract}
\begin{keyword}
 Quantum Yang-Baxter equation 
 \sep set-theoretic solution 
 \sep semi-brace 
 \sep skew brace 
 \sep  brace
\MSC[2020] 16T25\sep 81R50\sep 20M18
\end{keyword}

\end{frontmatter}

\section*{Introduction}

The Yang-Baxter equation is one of the fundamental equations of statistical mechanics that arose from two independent papers by Yang \cite{Ya67} and Baxter \cite{Ba72}. 
Motivated by its connection with many research areas, such as quantum groups, knot theory, and Hopf algebras, many mathematicians started to study this equation from different points of view.
A complete description of all solutions of the Yang-Baxter equation is still unknown, even if a large number of papers is dedicated to this topic. 
In this context, Drinfel'd \cite{Dr92} suggested the study of the special class of set-theoretic solutions. If $S$ is a set, a map  $r:S\times S \longrightarrow S\times S$ is said to be a \emph{set-theoretic solution of the Yang-Baxter equation}, or, briefly, a \emph{solution} on $S$, if the identity
\begin{align*}
    \left(r \times \id_S\right) 
    \left(\id_S \times r\right)
    \left(r \times \id_S\right) 
    = \left(\id_S \times r\right)
    \left(r \times \id_S\right)
    \left(\id_S \times r\right).
\end{align*}
is satisfied.
Writing $r(a,b) = (\lambda_a(b),\rho_b(a))$, with $\lambda_a$ and $\rho_b$ maps from $S$ into itself, for all $a,b \in S$,  $r$ is said to be \emph{left non-degenerate} if $\lambda_a$ is bijective, for every $a\in S$, \emph{right non-degenerate} if $\rho_b$ is bijective, for every $b\in S$, and \emph{non-degenerate} if $r$ is both left and right non-degenerate.
Bijective solutions have been initially studied by Etingof, Schedler and Soloviev \cite{ESS99}, Gateva-Ivanova and Van den Bergh \cite{GaVa98}, and Lu, Yan, and Zhu \cite{LuYZ00}. These pioneering works influenced and inspired the research from a more algebraic perspective in the next years.

In 2007, Rump discovered that any Jacobson radical ring determines a solution. This surprising fact motivated Rump to introduce \emph{braces}, algebraic structures including radical rings that turned out to be significant in the study of non-degenerate involutive solutions, namely, solutions $r$ such that $r^2=\id_{S\times S}$. 
Later on, algebraic structures close to braces, as skew braces and semi-braces, have been introduced for determining solutions. 
Following \cite{CCSt17} and \cite{JeAr19}, a set $B$ endowed with two binary operations $+$ and $\circ$ is said to be a \emph{(left) semi-brace} if $\left(B, +\right)$ is a semigroup, $\left(B, \circ\right)$ is a group, and the identity
\begin{align}\label{eq:semi}
    a\circ\left(b + c\right) = a\circ b + a\circ\left(a^- + c\right)\tag{$\ast$}
\end{align}
is satisfied, for all $a,b,c\in B$, where $a^-$ is the inverse of $a$ with respect to $\circ$. We highlight that in \eqref{eq:semi} it is implicit that the multiplication has higher precedence than the addition.
If $\left(B, +\right)$ is a left cancellative semigroup, we say that $B$ is a \emph{(left) cancellative semi-brace}.  
As shown in \cite{CCSt17}, if the additive structure $\left(B,+\right)$ is a group, then \eqref{eq:semi} is equivalent to the relation
\begin{align}\label{eq:skew}
a\circ\left(b + c\right) = a\circ b - a + a\circ c,\tag{$\ast\ast$}
\end{align}
for all $a,b,c\in B$, 
namely, $B$ is a \emph{skew brace}, the structure introduced in \cite{GuVe17}.
In addition, if the group $\left(B,+\right)$ is abelian, then $B$ is a \emph{brace}.\\
Any cancellative semi-brace $B$ gives rise to a left non-degenerate solution $r_B$ that is the map $r_B:B\times B\to B\times B$ defined by
$r_B\left(a,b\right) = \left(a\circ\left(a^- + b\right), \, \left(a^- + b\right)^-\circ b\right)$, for all $a,b\in B$. 
In particular, if $B$ is a skew brace, then $r_B$ is a non-degenerate bijective solution that is involutive if and only if $B$ is a brace.
Under mild assumptions, semi-braces produce degenerate solutions, among these the cubic ones, i.e., $r_B^3 = r_B$.
\\
Descriptions and classifications of these structures are object of investigation in many recent papers, as 
\cite{AcBo20, AlBy21, BaEs22,  CeJeOk20-2, CeSmVe19, Di21, GoNa21, JeKuVaVe19, Na19}, just to name a few.
In particular, in \cite{Ru19-2} Rump  provided a description of braces in terms of affine structures on groups. Using a different notation from that adopted by Rump, given a group $G = \left(B, \circ\right)$ we say that a map $\sigma:B\to\Sym_B$ is an \emph{affine structure} on $G$ if $\sigma$ is an anti-homomorphism from $G$ to the symmetric group on the set $B$ such that the following identity holds
$$
a\circ\sigma_a\left(b\right) = b\circ \sigma_b\left(a\right),
$$ 
for all $a,b\in B$. This notion allows to investigate isomorphism classes of braces having a fixed group as multiplicative group. 
The study of braces from the perspective of affine structures has led to new advances in the brace theory itself. 
More specifically, in \cite{Ru19-2} Rump provided constructions of finite braces by first proving that the determination of all affine structures can be reduced
to those of $p$,$q$-groups. 
Later on, in \cite{Ru20-3}, Rump obtained other instances of braces in terms of affine structures of decomposable solvable groups and, in \cite{Ru20-4}, proved what conjectured in \cite[Conjecture $6.9$]{GuVe17} by  
classifying all the affine structures of
a generalized quaternion group of order greater than or equal to $32$.

This paper aims to introduce a new notion of affine structure on a group and show how it is a useful tool for describing semi-braces and constructing special instances of them.
If $G = \left(B, \circ\right)$ is a group with identity $0$, we say that a map $\sigma:B\to B^B$ is an \emph{affine structure} on $G$ if $\sigma$ is a semigroup anti-homomorphism from the group $G$ to the set of all maps from $B$ into itself and the identity
$$
\sigma_{a}\left(b\circ \sigma_b\left(c\right)\right)
= \sigma_a\left(b\right)\circ \sigma_{\sigma_a\left(b\right)}\sigma_a\left(c\right)
$$
is satisfied, for all $a,b,c\in B$. 
Moreover, if $\sigma\left(B\right)\subseteq \Sym_B$, we say that $\sigma$ is a \emph{cancellative affine structure}. In such a case, if in addition $\sigma_a\left(0\right) = 0$, for every $a\in B$, we say that $\sigma$ is a \emph{groupal  affine structure}. We prove that Rump's affine structures are special groupal affine structures that we rename \emph{abelian affine structures} for our purposes.\\
We show the existence of a one-to-one correspondence  between affine structures of a given group $G$ and semi-braces having $G$ as multiplicative structure. More specifically, cancellative affine structures are in one-to-one correspondence with cancellative semi-braces and groupal affine structures with skew braces. In this correspondence, isomorphism classes of semi-braces correspond to equivalent classes of affine structures on $G$. In particular, we prove that semi-braces and affine structures are equivalent as categories. 

Part of this paper is dedicated to determine examples of affine structures and to obtain new ones starting from fixed affine structures. Particular attention has been posed to affine structures related to \emph{bi-skew braces}, namely, skew braces that are also skew braces reversing the roles of the multiplication and the sum.
Bi-skew braces was introduced and studied by Child in \cite{Ch19} 
in the finite case and after they were investigated also in \cite{Ca20}, \cite{CarSt21}, and \cite{Ko21}.
Note that, recently, the authors in \cite{BaNeYa22}  focused on bi-skew braces $B$ that lie in the class of $\lambda$-homomorphic skew braces, i.e., skew braces for which $\lambda_{a+b} = \lambda_a\lambda_b$, where $\lambda_a\left(b\right) = - a + a\circ b$, for all $a,b\in B$.
Some of bi-skew braces we give in this work do not belong to such a class, hence,  as observed in \cite[Section 5]{BaNeYa22x}, our examples are different from that studied in \cite{Ch19}, \cite{Ko21, Ko22}, and \cite{CarSt21, CarSt21x}, that are all of $\lambda$-homomorphic type. 

Finally, starting from two affine structures $\sigma^S$ and $\sigma^T$ on two groups $H$ and $K$, we provide sufficient conditions so that specific maps $\alpha$ and $\beta$ make the map $\sigma^S\times \sigma^T$ into an affine structure on a group $H\bowtie_{\alpha,\beta}K$ that is isomorphic to the Zappa product of $H$ and $K$. In particular, such conditions are also necessary in the cancellative case. The multiplicative structure of any semi-brace obtained through such a method is the same obtained by means of the matched product of semi-braces contained in \cite[Theorem 6]{CCSt20-2} (see also \cite[p.249]{JeAr19}). However, specific examples show that, in general, semi-braces obtained by these two constructions are not the same, up to isomorphism. In addition, there exist maps $\alpha$ and $\beta$ that are suitable for obtaining semi-braces through both the techniques above mentioned. 
%
%
%
%
%
%
%
\medskip

\section{Affine structures on groups}\label{Section-aff}
This section aims to introduce affine structures on groups and clarify their connection with the notion already given by Rump in \cite{Ru19-2}. Moreover, we provide classes of examples of affine structures of different type.
\medskip

Hereinafter, whenever we consider a group $G = \left(B, \circ\right)$, we will denote its identity by $0$ and by $a^-$ the inverse of $a$, for every $a\in B$. 
In addition, denoted by $B^B$ the set of maps from $B$ into itself, for a map $\sigma:B\to B^B$, we write $\sigma_a$ to denote the image $\sigma\left(a\right)$, for every $a\in B$.
\medskip

Initially, let us recall the left-version of the notion of an affine structure originally introduced by Rump in \cite{Ru19-2} (see also \cite[p. 3]{Ru20-3}). 
We specify that we rename such a notion as an \emph{abelian affine structure} only for our purposes.
\begin{defin}
Let $G = \left(B, \circ\right)$ be a group and $\sigma:B\to\Sym_B$ an anti-homomorphism from $G$ to the symmetric group on the set $B$. Then, $\sigma$ is said to be an \emph{abelian affine structure} on $G$ if
\begin{align}\label{eq:abelian}
     a\circ\sigma_a\left(b\right)
     = b\circ \sigma_b\left(a\right),
\end{align}
for all $a,b\in B$.
\end{defin}

Below, we introduce the notion of the affine structure and illustrate the link with the previous definition.
\begin{defin}
Let $G = \left(B, \circ\right)$ be a group and $\sigma:B\to B^B$ a semigroup anti-homomorphism from the group $G$ to the set of all the maps from $B$ into itself. Then, we say that $\sigma$ is an \emph{affine structure} on $G$ if the identity
\begin{align}\label{eq:associative}
    \sigma_{a}\left(b\circ \sigma_b\left(c\right)\right)
    = \sigma_a\left(b\right)\circ \sigma_{\sigma_a\left(b\right)}\sigma_a\left(c\right)
\end{align}
holds, for all $a,b,c\in B$.
In particular, if $\sigma\left(B\right)\subseteq \Sym_B$ (thus $\sigma$ is a group anti-homomorphism), we say that $\sigma$ is a \emph{cancellative affine structure}. 
In this case, if in addition, $\sigma_a\left(0\right) = 0$, for every $a\in B$, we say that $\sigma$ is a \emph{groupal  affine structure}.
\end{defin}
\medskip

Observe that, in general, any affine structure satisfies  $\sigma_0\left(0\right) = 0$. In fact, since $\sigma_0^2 = \sigma_0$, by \eqref{eq:associative}, we have that
$\sigma_0\left(0\right) = \sigma_0\left(0\circ \sigma_0\left(0\right)\right) = \sigma_0\left(0\right)\circ \sigma_{\sigma_0\left(0\right)}\sigma_0\left(0\right)$, hence $\sigma_{\sigma_0\left(0\right)}\left(0\right) = \sigma_{\sigma_0\left(0\right)}\sigma_0\left(0\right)= 0$. It follows that
$\sigma_0\left(0\right) 
= \sigma_{\sigma_0\left(0\right)}\left(0\right)\circ \sigma_{\sigma_{\sigma_0\left(0\right)}\left(0\right)}\sigma_{\sigma_0\left(0\right)}\left(0\right)$ from which we obtain that $\sigma_0\left(0\right) = \sigma_{\sigma_0\left(0\right)}\left(0\circ\sigma_0\left(0\right)\right) = \sigma_{\sigma_0\left(0\right)}\left(0\right) = 0$.
\medskip

Let us note that abelian affine structures are special cases of groupal affine structures. 
Indeed, if $\sigma$ is an abelian affine structure on a group $\left(B, \circ\right)$ and $a,b,c\in B$,  we have that
\begin{align*}
    a\circ \sigma_{a}\left(b\circ \sigma_b\left(c\right)\right)
    &= b\circ \sigma_b\left(c\right)\circ \sigma_{b\circ \sigma_b\left(c\right)}\left(a\right)&\mbox{by \eqref{eq:abelian}}\\
    &= b\circ \sigma_b\left(c\right)\circ
    \sigma_{\sigma_b\left(c\right)}\sigma_b\left(a\right)\\
    &= b\circ \sigma_b\left(a\right)\circ
    \sigma_{\sigma_b\left(a\right)}\sigma_b\left(c\right)&\mbox{by \eqref{eq:abelian}}\\
    &= b\circ \sigma_b\left(a\right)\circ
    \sigma_{b\circ \sigma_b\left(a\right)}\left(c\right)\\
    &= a\circ \sigma_a\left(b\right)\circ
    \sigma_{a\circ \sigma_a\left(b\right)}\left(c\right)&\mbox{by \eqref{eq:abelian}}\\
    &= a\circ \sigma_a\left(b\right)\circ
    \sigma_{\sigma_a\left(b\right)}\sigma_a\left(c\right),
\end{align*}
thus \eqref{eq:associative} holds. Moreover, 
$\sigma_a\left(0\right) 
    = a^-\circ a\circ\sigma_a\left(0\right)
    = a^-\circ 0\circ\sigma_0\left(a\right)
= a^-\circ a = 0$.
\medskip

In addition, observe that there do not exist intermediate affine structures between the general case and the cancellative one, since any affine structure satisfying $\sigma_0 = \id_B$ is cancellative.
\medskip

\begin{ex}
    Let $G = \left(B,\circ\right)$ be a group.  If $|B| > 1$,  then the map $\sigma:B\to\Sym_B$ given by $\sigma_a\left(b\right) = a^-\circ b$, for all $a,b\in B$, is an anti-homomorphism that satisfies \eqref{eq:associative} and, if $a\neq 0$, then $  \sigma_a\left(0\right) = a^-\neq 0$. Hence, $\sigma$ is an affine structure that is cancellative but not groupal.
\end{ex}
\medskip

\begin{ex}
Let $G = \left(B,\circ\right)$ be a group and $f$ an idempotent endomorphism of $G$. Then,  the map $\sigma:B\to B^B$ given by $\sigma_a = f$, for every $a\in B$, is an affine structure on $G$ that is not cancellative if $f\neq\id_B$. Clearly, if $f =\id_B$, then $\sigma$ is groupal and it is abelian if and only if the group $G$ is abelian.
\end{ex}
\medskip

\begin{ex}\label{exs:1}
Let $G = \left(B,\circ\right)$ be a group and $f$ an idempotent endomorphism of $G$. 
Then, the map $\sigma:B\to\Sym_B$ defined by $\sigma_a\left(b\right) = f\left(a\right)^-\circ b\circ f\left(a\right)$, for all $a,b\in B$,  is a groupal affine structure on $G$. Note that, if $f = \id_B$, then $\sigma$ is abelian if and only $G$ is an abelian group.
\end{ex}
\medskip

From now on, whenever we consider a cyclic group $G$, we assume that $G = \langle\, g\, \rangle$, unless otherwise stated; in addition, if $G$ is finite of order $m$, we will refer to $G$ by denoting it as $C_m$.
\begin{exs}\label{ex:Z}
Let $G = \left(B,\circ\right)$ be the infinite cyclic group. 
\begin{enumerate}
\item The map $\sigma:B\to\Sym_{B}$ defined by
    $\sigma_{g^k}\left(g^l\right):= g^{\left(-1\right)^{k}l}$, for all $k,l\in \mathbb{Z}$, is a groupal affine structure of $G$. Moreover, $\sigma$ is not abelian, since, for instance, 
    $g\circ\sigma_g\left(g^2\right) = g^{-1}$, instead $g^2\circ\sigma_{g^2}\left(g\right) = g^3$.
\item The map $\sigma:B\to\Sym_{B}$ defined by
    $\sigma_{g^k}\left(g^l\right):= g^{k\left(-1 + \left(-1\right)^l\right) + l}$, for all $k,l\in \mathbb{Z}$ is a groupal affine structure on $G$ that is not abelian. Indeed, one can check that the maps $\sigma_{g^k}$ are bijections on $B$, for every $k\in\mathbb{Z}$, and 
    that $\sigma$ is an anti-homomorphism of groups.
    Note also that $\sigma_{g^k}\left(0\right) = 0$.
    Furthermore, if $k,l,m\in\mathbb{Z}$, then
    $\left(-1\right)^{l\left(-1 + \left(-1\right)^m\right) + m} = \left(-1\right)^m$,
    hence we obtain that 
    \begin{align*}
        \sigma_{g^k}\left(g^l\right)
        \circ\, &\sigma_{\sigma_{g^k}\left(g^l\right)}\sigma_{g^l}\left(g^m\right)
        = g^{k\left(-1+\left(-1\right)^l\right)+l}
        \circ
        \sigma_{g^{k\left(-1+\left(-1\right)^l\right)+l}}
        \left(g^{l\left(-1+\left(-1\right)^m\right)+m}\right)\\
        &= g^{k\left(-1+\left(-1\right)^l\right)+l + \left(k\left(-1+\left(-1\right)^l\right)+l\right)\left(1 + \left(-1\right)^m\right) + l\left(-1+\left(-1\right)^m\right)+m}\\
        &= g^{k\left(- 1 + \left(-1\right)^{l\left(-1\right)^m + m}\right)
        + l\left(-1\right)^m + m}\\
        &= \sigma_{g^k}\left(g^{l\left(-1\right)^m + m}\right)
        = \sigma_{g^k}\left(g^{l + l\left(-1+ \left(-1\right)^m\right) + m}\right)\\
        &= \sigma_{g^k}\left(g^l\circ \sigma_{g^l}\left(g^m\right)\right),
    \end{align*}
    i.e., \eqref{eq:associative} holds. 
    Finally, $\sigma$ is not abelian since $g\circ\sigma_g\left(g^2\right) = g^3$ and  $g^2\circ \sigma_{g^2}\left(g\right) = g^{-1}$.
\end{enumerate}
\end{exs}
\medskip

\begin{ex}\label{ex:Z/Z2l}
    In the finite case, one can similarly define the maps $\sigma$ in $1.$ and $2.$ of \cref{ex:Z} on the cyclic group
    $G = C_m$ 
    with $m$ even, and obtain that they are affine structures that are not abelian if $m\nmid 4$.
\end{ex}
\medskip

The following proposition illustrates how to define affine structures on isomorphic groups with each other.
\begin{prop}\label{iso-aff-str}
    Let $\sigma$ be an affine structure on a group $G = \left(B,\circ\right)$. If $H = \left(C,\circ\right)$ is a group and $f:B\to C$ a group isomorphism  from $G$ to $H$, then the map $\varphi:C\to C^C$ given by 
    $$
    \varphi_u:= f\sigma_{f^{-1}\left(u\right)}f^{-1},
    $$
    for every $u\in C$, is an affine structure on $H$. 
    Moreover, if $\sigma$ is cancellative, then $\varphi$ is cancellative; if in addition $\sigma$ is groupal, then $\varphi$ is groupal.
    \begin{proof}
        Clearly, $\varphi$ is a semigroup anti-homomorphism and if $u,v,w\in C$, we get
        \begin{align*}
            \varphi_u\left(v\circ\varphi_v\left(w\right)\right)
            &= f\sigma_{f^{-1}\left(u\right)}\left(f^{-1}\left(v\right)\circ\sigma_{f^{-1}\left(v\right)} f^{-1}\left(w\right)\right)\\
            &= f\left(\sigma_{f^{-1}\left(u\right)}f^{-1}\left(v\right)\circ \sigma_{\sigma_{f^{-1}\left(u\right)}f^{-1}\left(v\right)}\sigma_{f^{-1}\left(u\right)}f^{-1}\left(w\right)\right)&\mbox{by \eqref{eq:associative}}\\
            &=\varphi_{u}\left(v\right)\circ
            f\sigma_{f^{-1}\varphi_{u}\left(v\right)}\sigma_{f^{-1}\left(u\right)}f^{-1}\left(w\right)\\
            &= \varphi_{u}\left(v\right)\circ
            f\sigma_{f^{-1}\varphi_{u}\left(v\right)}f^{-1}\varphi_{u}\left(w\right)\\
            &= \varphi_{u}\left(v\right)\circ\varphi_{ \varphi_{u}\left(v\right)} \varphi_{u}\left(w\right),
        \end{align*}
        i.e., \eqref{eq:associative} holds. 
        Finally, the remaining part of the proof is trivial.
    \end{proof}
\end{prop}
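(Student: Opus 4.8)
The plan is to transport everything along the isomorphism $f$, so that the defining properties of $\sigma$ become the corresponding properties of $\varphi$. Concretely, I would first record that, since $f$ is a group isomorphism, so is $f^{-1}$; in particular $f^{-1}\left(u\circ v\right) = f^{-1}\left(u\right)\circ f^{-1}\left(v\right)$ and $f^{-1}\left(0\right) = 0$. Using this together with the fact that $\sigma$ is a semigroup anti-homomorphism and that $f^{-1}f = \id_B$, one checks at once that $\varphi$ is a semigroup anti-homomorphism:
\[
\varphi_{u\circ v} = f\,\sigma_{f^{-1}\left(u\right)\circ f^{-1}\left(v\right)}f^{-1} = f\,\sigma_{f^{-1}\left(v\right)}\sigma_{f^{-1}\left(u\right)}f^{-1} = \left(f\,\sigma_{f^{-1}\left(v\right)}f^{-1}\right)\left(f\,\sigma_{f^{-1}\left(u\right)}f^{-1}\right) = \varphi_v\varphi_u .
\]

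The core of the argument is verifying \eqref{eq:associative} for $\varphi$. The useful preliminary identity is $f^{-1}\varphi_v = \sigma_{f^{-1}\left(v\right)}f^{-1}$ for every $v\in C$, which follows immediately from $f^{-1}f = \id_B$. Applying it, and using that $f^{-1}$ is a homomorphism, one rewrites $f^{-1}\bigl(v\circ\varphi_v\left(w\right)\bigr) = f^{-1}\left(v\right)\circ \sigma_{f^{-1}\left(v\right)}f^{-1}\left(w\right)$, so that $\varphi_u\bigl(v\circ\varphi_v\left(w\right)\bigr)$ equals $f\,\sigma_{f^{-1}\left(u\right)}\bigl(f^{-1}\left(v\right)\circ\sigma_{f^{-1}\left(v\right)}f^{-1}\left(w\right)\bigr)$. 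Now I would invoke \eqref{eq:associative} for $\sigma$ at the triple $\bigl(f^{-1}\left(u\right),f^{-1}\left(v\right),f^{-1}\left(w\right)\bigr)$, push $f$ through the resulting $\circ$ (again because $f$ is a homomorphism), and re-express the two factors: $f\,\sigma_{f^{-1}\left(u\right)}f^{-1}\left(v\right) = \varphi_u\left(v\right)$, and, since correspondingly $f^{-1}\varphi_u\left(v\right) = \sigma_{f^{-1}\left(u\right)}f^{-1}\left(v\right)$, the nested subscript $\sigma_{\sigma_{f^{-1}\left(u\right)}f^{-1}\left(v\right)}$ becomes $\varphi_{\varphi_u\left(v\right)}$ after conjugation by $f$. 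Reassembling yields exactly $\varphi_u\left(v\right)\circ\varphi_{\varphi_u\left(v\right)}\varphi_u\left(w\right)$, which is \eqref{eq:associative} for $\varphi$.

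Finally, the remaining claims are immediate. If $\sigma\left(B\right)\subseteq\Sym_B$, then each $\varphi_u$ is a composite of the bijections $f$, $\sigma_{f^{-1}\left(u\right)}$, $f^{-1}$, hence lies in $\Sym_C$, so $\varphi$ is cancellative; and if in addition $\sigma_a\left(0\right) = 0$ for all $a\in B$, then $\varphi_u\left(0\right) = f\,\sigma_{f^{-1}\left(u\right)}\left(f^{-1}\left(0\right)\right) = f\,\sigma_{f^{-1}\left(u\right)}\left(0\right) = f\left(0\right) = 0$, so $\varphi$ is groupal. The only point requiring care is the bookkeeping of the nested subscript in \eqref{eq:associative}: one must track that after conjugating by $f$ the operator $\sigma_{\sigma_a\left(b\right)}$ turns into $\varphi_{\varphi_u\left(v\right)}$ rather than something with a stray $f$; this hinges precisely on the identity $f^{-1}\varphi_u = \sigma_{f^{-1}\left(u\right)}f^{-1}$. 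Everything else is a routine, formal transport along the isomorphism.
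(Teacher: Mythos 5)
Your proposal is correct and follows essentially the same route as the paper's proof: both transport \eqref{eq:associative} along $f$, apply it to the triple $\bigl(f^{-1}\left(u\right),f^{-1}\left(v\right),f^{-1}\left(w\right)\bigr)$, and handle the nested subscript via the identity $f^{-1}\varphi_u = \sigma_{f^{-1}\left(u\right)}f^{-1}$. The only difference is that you spell out the anti-homomorphism check and the cancellative/groupal cases, which the paper dismisses as trivial.
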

\medskip

\begin{defin}
Let $G = \left(B,\circ\right), H = \left(C,\circ\right)$ be groups and $\sigma,\varphi$ affine structures on $G$ and $H$, respectively. If there exists a group homomorphism $f$ from $G$ to $H$ such that
\begin{align*}
\varphi_{f\left(a\right)}f = f\sigma_{a},
\end{align*}
for every $a\in B$, we say that $\sigma$ and $\varphi$ are \emph{homomorphic via $f$} and $f$ is a \emph{homomorphism between $\sigma$ and $\varphi$}. If in addition $f$ is bijective, we say that $\sigma$ and $\varphi$ are \emph{equivalent via $f$} and $f$ is an \emph{isomorphism between $\sigma$ and $\varphi$}.
\end{defin}

\medskip

Given a group $G$, the following result provides sufficient conditions to obtain a new affine structure on $G$ starting from two given ones on the same group $G$. 
The subsequent \cref{ex:C8+C8=D8} will show that, given an affine structure $\sigma$ on a group $G$, \cref{prop:comp-aff} turns out to be helpful to produce affine structures on $G$ that are not isomorphic to $\sigma$. 
\begin{theor}\label{prop:comp-aff}
  Let $G = \left(B, \circ\right)$ be a group and $\varphi,\omega:B\to B^B$ affine structures on $G$. If the following conditions are satisfied
  \begin{align}
   \varphi_a\omega_b \label{cond:om-var-com}
    &= \omega_b\varphi_a\tag{$c1$}\\
    \varphi_{b\circ \omega_a\left(b\right)^-}\label{cond:om-var}
    &= \omega_{\varphi_{a}\omega_a\left(b\right)\circ \omega_a\left(b\right)^-},\tag{$c2$}
  \end{align}
  for all $a,b\in B$, then the map $\sigma:B\to B^B$ given by $\sigma_a:= \varphi_a\omega_a$, for every $a\in B$, is an affine structure on $G$. 
 \begin{proof}
     Initially, note that $\sigma$ is a semigroup anti-homomorphism.
     Besides, if $a,b,c\in B$,
     \begin{align*}
       \sigma_a\left(b\circ\sigma_b\left(c\right)\right)
       &= \varphi_a\left(\omega_a\left(b\right)\circ\omega_{\omega_a\left(b\right)}\omega_a\varphi_b\left(c\right)\right)&\mbox{$\omega$ affine structure}\\
       &= \varphi_a\left(\omega_a\left(b\right)\circ\omega_{\omega_a\left(b\right)}\omega_a\varphi_0\varphi_b\left(c\right)\right)\\
       &= \varphi_a\left(\omega_a\left(b\right)\circ
       \varphi_{\omega_a\left(b\right)}\varphi_{\omega_a\left(b\right)^-}
       \omega_{\omega_a\left(b\right)}\omega_a\varphi_b\left(c\right)\right)&\mbox{by \eqref{cond:om-var-com}}\\
       &= \varphi_a\omega_a\left(b\right)\circ
       \varphi_{\varphi_a\omega_a\left(b\right)}
       \varphi_a\varphi_{\omega_a\left(b\right)^-}\omega_{\omega_a\left(b\right)}\omega_a\varphi_b\left(c\right)&\mbox{$\varphi$ affine structure}\\
       &= \varphi_a\omega_a\left(b\right)\circ
       \varphi_{\varphi_a\omega_a\left(b\right)}
       \varphi_a\varphi_{\omega_a\left(b\right)^-}\omega_{\omega_a\left(b\right)}\varphi_b\omega_a\left(c\right)
     \end{align*}
and
\begin{align*}
    \sigma_a\left(b\right)\circ\sigma_{\sigma_a\left(b\right)}\sigma_a\left(c\right)
    &= \varphi_a\omega_a\left(b\right)\circ
    \varphi_{\varphi_a\omega_a\left(b\right)}\omega_{\varphi_a\omega_a\left(b\right)}\varphi_a\omega_a\left(c\right)\\
    &= \varphi_a\omega_a\left(b\right)\circ
    \varphi_{\varphi_a\omega_a\left(b\right)}
    \varphi_a\omega_{\varphi_a\omega_a\left(b\right)}\omega_a\left(c\right).
\end{align*}
Observing that
\begin{align*}
    \varphi_{\omega_a\left(b\right)^-}\omega_{\omega_a\left(b\right)}\varphi_b
    &= \omega_{\omega_a\left(b\right)}\varphi_{\omega_a\left(b\right)^-}\varphi_b
    = \omega_{\omega_a\left(b\right)}\varphi_{b\circ \omega_a\left(b\right)^-}\\
    &= \omega_{\omega_a\left(b\right)}
    \omega_{\varphi_a\omega_a\left(b\right)\circ\omega_a\left(b\right)^-}&\mbox{by \eqref{cond:om-var}}\\
    &= \omega_{\varphi_a\omega_a\left(b\right)\circ 0}
    =  \omega_{\varphi_a\omega_a\left(b\right)},
\end{align*}
we obtain that \eqref{eq:associative} is satisfied. 
Therefore, the claim follows.
\end{proof}
\end{theor}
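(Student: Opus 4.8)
The plan is to verify directly that $\sigma$ satisfies the two requirements in the definition of an affine structure: that it is a semigroup anti-homomorphism and that it fulfils \eqref{eq:associative}. The first is immediate. Since $\varphi$ and $\omega$ are anti-homomorphisms, $\sigma_{a\circ b}=\varphi_{a\circ b}\omega_{a\circ b}=\varphi_b\varphi_a\omega_b\omega_a$ for all $a,b\in B$, and moving $\varphi_a$ past $\omega_b$ by \eqref{cond:om-var-com} turns this into $\varphi_b\omega_b\varphi_a\omega_a=\sigma_b\sigma_a$; here only \eqref{cond:om-var-com} is used.

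For \eqref{eq:associative} I would first use \eqref{cond:om-var-com} to rewrite $\sigma_b(c)=\varphi_b\omega_b(c)=\omega_b\varphi_b(c)$, so that $b\circ\sigma_b(c)=b\circ\omega_b(\varphi_b(c))$ has exactly the shape to which the affine-structure identity of $\omega$ applies, with $c$ replaced by $\varphi_b(c)$. This yields
\[
\sigma_a(b\circ\sigma_b(c))=\varphi_a\bigl(\omega_a(b)\circ\omega_{\omega_a(b)}\omega_a\varphi_b(c)\bigr).
\]
To then bring in the affine-structure identity of $\varphi$ one needs the inner expression in the form $x\circ\varphi_x(y)$; since $\varphi_{\omega_a(b)}$ need not be invertible, I would instead insert a copy of $\varphi_0$. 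Indeed, $\varphi$ being an anti-homomorphism gives $\varphi_{\omega_a(b)}\varphi_{\omega_a(b)^-}=\varphi_{\omega_a(b)^-\circ\omega_a(b)}=\varphi_0$, and by \eqref{cond:om-var-com} this idempotent commutes with every $\omega_x$ and is absorbed by every $\varphi_x$ in a composition, so $\omega_{\omega_a(b)}\omega_a\varphi_b=\varphi_{\omega_a(b)}\varphi_{\omega_a(b)^-}\omega_{\omega_a(b)}\omega_a\varphi_b$. Applying the identity for $\varphi$ to $\omega_a(b)\circ\varphi_{\omega_a(b)}(\cdots)$ and then swapping $\omega_a\varphi_b=\varphi_b\omega_a$ by \eqref{cond:om-var-com}, I get
\[
\sigma_a(b\circ\sigma_b(c))=\varphi_a\omega_a(b)\circ\varphi_{\varphi_a\omega_a(b)}\,\varphi_a\,\varphi_{\omega_a(b)^-}\omega_{\omega_a(b)}\varphi_b\,\omega_a(c).
\]

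Expanding instead the right-hand side of \eqref{eq:associative} from the definition of $\sigma$ and using \eqref{cond:om-var-com} once more gives $\sigma_a(b)\circ\sigma_{\sigma_a(b)}\sigma_a(c)=\varphi_a\omega_a(b)\circ\varphi_{\varphi_a\omega_a(b)}\,\varphi_a\,\omega_{\varphi_a\omega_a(b)}\,\omega_a(c)$. Comparing with the previous display, it then suffices to prove the single operator equality
\[
\varphi_{\omega_a(b)^-}\omega_{\omega_a(b)}\varphi_b=\omega_{\varphi_a\omega_a(b)},
\]
and this is exactly where \eqref{cond:om-var} is used: commute $\varphi_{\omega_a(b)^-}$ past $\omega_{\omega_a(b)}$ by \eqref{cond:om-var-com}, merge $\varphi_{\omega_a(b)^-}\varphi_b=\varphi_{b\circ\omega_a(b)^-}$, rewrite this as $\omega_{\varphi_a\omega_a(b)\circ\omega_a(b)^-}$ by \eqref{cond:om-var}, and finally merge $\omega_{\omega_a(b)}\omega_{\varphi_a\omega_a(b)\circ\omega_a(b)^-}=\omega_{(\varphi_a\omega_a(b)\circ\omega_a(b)^-)\circ\omega_a(b)}=\omega_{\varphi_a\omega_a(b)}$, using that $\omega$ is an anti-homomorphism together with $\omega_a(b)^-\circ\omega_a(b)=0$. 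Plugging this back into the left-hand computation reproduces the right-hand side, so \eqref{eq:associative} holds and $\sigma$ is an affine structure on $G$.

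I expect the difficulty to be essentially bookkeeping. The one step that is not routine is the insertion of $\varphi_0$: it is what decouples the $\varphi$-part from the $\omega$-part so that the two affine-structure identities can be invoked one after the other, and it is needed precisely because $\varphi_{\omega_a(b)}$ may fail to be invertible. A related point demanding care is that $\varphi_0$ need not equal $\id_B$; it is only an idempotent acting as a two-sided unit on compositions with the maps $\varphi_x$, so each time it is created or absorbed the justification has to go through the anti-homomorphism relations or \eqref{cond:om-var-com}, never through a bare identification $\varphi_0=\id_B$.
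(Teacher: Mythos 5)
Your argument is correct and follows essentially the same route as the paper's proof: you apply the affine identity for $\omega$ first, insert the idempotent $\varphi_0=\varphi_{\omega_a\left(b\right)}\varphi_{\omega_a\left(b\right)^-}$ to set up the affine identity for $\varphi$, and reduce \eqref{eq:associative} to the operator equality $\varphi_{\omega_a\left(b\right)^-}\omega_{\omega_a\left(b\right)}\varphi_b=\omega_{\varphi_a\omega_a\left(b\right)}$, which you establish from \eqref{cond:om-var-com} and \eqref{cond:om-var} exactly as in the paper. Your closing remark that $\varphi_0$ is only an idempotent absorbed via the anti-homomorphism relations, and never to be identified with $\id_B$, is precisely the point the paper's computation relies on.
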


\begin{cor}\label{cor-prop:comp-aff}
Let $G = \left(B, \circ\right)$ be a group and $\varphi,\omega:B\to B^B$ cancellative affine structures on $G$. If the following conditions are satisfied 
  \begin{align}
  \varphi_a\omega_b&= \omega_b\varphi_a\tag{$c1$}\\
  \varphi_{\omega_{a^-}\left(b\right)\circ b^-}
  &= \omega_{\varphi_{a}\left(b\right)\circ b^-}\label{cond:om-var-2},\tag{$c2'$}
  \end{align}
  for all $a,b\in B$, then the map $\sigma:B\to B^B$ given by $\sigma_a:= \varphi_a\omega_a$, for every $a\in B$, is a cancellative affine structure on $G$. Moreover, if $\omega$ and $\varphi$ are groupal, then $\sigma$ is groupal.
\end{cor}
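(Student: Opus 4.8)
The plan is to derive Corollary~\ref{cor-prop:comp-aff} from Theorem~\ref{prop:comp-aff} by checking that, for cancellative affine structures $\varphi,\omega$, the pair of hypotheses $(c1)$ and $(c2')$ is \emph{equivalent} to the pair $(c1)$ and $(c2)$ used in the theorem. Since $(c1)$ is literally the same condition in both statements, the whole task reduces to showing that, in the presence of $(c1)$ and the cancellativity (so that every $\omega_a,\varphi_a$ lies in $\Sym_B$ and $\omega$, $\varphi$ are genuine group anti-homomorphisms, hence $\omega_{a^-}=\omega_a^{-1}$ and $\varphi_{a^-}=\varphi_a^{-1}$), condition $(c2)$ and condition $(c2')$ say the same thing after a change of variable.

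First I would rewrite $(c2)$ by substituting. In $(c2)$ the element $\omega_a(b)$ appears throughout; since $\omega_a$ is a bijection of $B$, I set $b' := \omega_a(b)$, i.e. $b = \omega_{a}^{-1}(b') = \omega_{a^-}(b')$. Then $b\circ\omega_a(b)^- = \omega_{a^-}(b')\circ b'^{\,-}$ and $\varphi_a\omega_a(b)\circ\omega_a(b)^- = \varphi_a(b')\circ b'^{\,-}$, so $(c2)$ becomes exactly
$$
\varphi_{\,\omega_{a^-}(b')\circ b'^{\,-}} = \omega_{\,\varphi_a(b')\circ b'^{\,-}},
$$
which is precisely $(c2')$ after renaming $b'$ back to $b$. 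Running the substitution in the other direction (replace $b$ in $(c2')$ by $\omega_a(b)$) recovers $(c2)$, so the two conditions are equivalent given cancellativity. Hence by Theorem~\ref{prop:comp-aff} the map $\sigma_a := \varphi_a\omega_a$ is an affine structure on $G$; and since $\Sym_B$ is closed under composition, $\sigma(B)\subseteq\Sym_B$, so $\sigma$ is in fact a cancellative affine structure.

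For the last sentence, if $\varphi$ and $\omega$ are groupal then $\varphi_a(0)=0=\omega_a(0)$ for all $a\in B$, whence $\sigma_a(0)=\varphi_a\omega_a(0)=\varphi_a(0)=0$ for all $a\in B$, i.e. $\sigma$ is groupal. I expect the only mildly delicate point to be bookkeeping the identities $\omega_{a^-}=\omega_a^{-1}$, $\varphi_{a^-}=\varphi_a^{-1}$ that are needed to turn the $a^-$ in $(c2')$ into the inverse map $\omega_a^{-1}$ used implicitly in the substitution; these follow immediately from the fact that a group anti-homomorphism into $\Sym_B$ sends inverses to inverses, so there is no real obstacle and the corollary is essentially a reformulation of the theorem.
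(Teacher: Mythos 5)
Your proposal is correct and follows the same route as the paper: the paper's proof also deduces the corollary from \cref{prop:comp-aff} by asserting that \eqref{cond:om-var-2} is equivalent to \eqref{cond:om-var} (dismissed there as ``a routine computation'') and then noting that $\sigma(B)\subseteq\Sym_B$ and the groupal property are immediate. Your explicit substitution $b'=\omega_a(b)$, together with the observation $\omega_{a^-}=\omega_a^{-1}$, is exactly the routine computation the paper omits.
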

\begin{proof}
It is a routine computation to verify that \eqref{cond:om-var-2} is equivalent to \eqref{cond:om-var}.
Besides, note that if $\omega$ and $\varphi$ are cancellative, then $\sigma$ is a group anti-homomorphism. Finally, if $\omega$ and $\varphi$ are groupal, it clearly holds that $\sigma_a\left(0\right) = 0$, for every $a\in B$.
\end{proof}
\medskip

The following is an example of an abelian affine structure constructed as in \cref{cor-prop:comp-aff} and starting from one that is groupal but not abelian. 
\begin{ex}\label{ex:C8+C8=D8}
Let $G = \left(B,\circ\right)$ be the cyclic group 
$C_8$
and consider $\omega,\varphi: B\to\Sym_B$ both equal to the groupal affine structure as in \cref{ex:Z}-$2.$, namely,  $\omega_{g^k}\left(g^l\right) = g^{k\left(-1 + \left(-1\right)^l\right) + l}$, for all $k,l\in\mathbb{Z}$. Then, the assumptions of \cref{cor-prop:comp-aff} are satisfied.
Indeed, if $k,h,l\in\mathbb{Z}$, we clearly obtain that $\omega_{g^k}\varphi_{g^h} = \varphi_{g^h}\omega_{g^k}$; in addition setting  $u:=k + k\left(-1\right)^{l+1} + k\left(-1\right)^{h+1} + k\left(-1\right)^{h + l}$ and $t:=- k + k\left(-1\right)^l + k\left(-1\right)^h + k\left(-1\right)^{h+l+1}$,
\begin{align*}
 \omega_{\varphi_{g^k}\left(g^h\right)}\left(g^l\right) 
 &= \sigma_{g^{k\left(-1 + \left(-1\right)^h\right)+h}}\left(g^l\right)
 = g^{\left(-k + k\left(-1\right)^h +h\right)\left(- 1 + \left(-1\right)^l\right)+ l}
 = g^{u - h + h\left(-1\right)^{l}+ l}
\end{align*}
and, since $\left(g^k\right)^- = g^{-k}$, 
\begin{align*}
    \varphi_{\omega_{\left(g^k\right)^-}\left(g^h\right)}\left(g^l\right)
    &= \sigma_{g^{-k\left(-1 + \left(-1\right)^h\right)+h}}\left(g^l\right)
    = g^{\left(k + k\left(-1\right)^{h+1} + h\right)\left(-1 + \left(-1\right)^l\right) + l}
    = g^{t - h + h\left(-1\right)^l + l}.
\end{align*}  
Note that if $h,l$ are odd, then $u = 4k$ and $t = -4k$, thus \eqref{cond:om-var} holds. Moreover, in all the the other cases  $u = t = 0$, hence \eqref{cond:om-var} is always satisfied. Therefore, the map $\sigma:B\to \Sym_B$ given by $\sigma_{g^k}:= \omega_{g^k}^2$ is an affine structure on the group $G$.
Finally, by distinguishing all the cases in which $k,l$ are even  or odd,  it is a routine computation to check that  \eqref{eq:abelian}  holds, namely, $\sigma$ is abelian.
\end{ex}
\medskip

\section{Semi-braces and affine structures}\label{section-semi-aff}
In this section, we prove there exists a one-to-one correspondence  between affine structures on a given group $G$ and semi-braces having $G$ as multiplicative structure. 
In particular, we show that semi-braces and affine structures are equivalent (or isomorphic) as categories.
\medskip

Initially, let us, briefly, recall some basics on the structure of the semi-brace.
\begin{defin}[\cite{CCSt17}, \cite{JeAr19}]
A set $B$ endowed with two binary operations $+$ and $\circ$ is said to be a \emph{(left) semi-brace} if $\left(B, +\right)$ is a semigroup, $\left(B, \circ\right)$ is a group, and the identity
\begin{align*}
    a\circ\left(b + c\right) = a\circ b + a\circ\left(a^- + c\right)\tag{$\ast$}
\end{align*}
holds, for all $a,b,c\in B$, where $a^-$ denotes the inverse of $a$ with respect to $\circ$. 
Moreover, if $\left(B, +\right)$ is a (left) cancellative semigroup, then $B$ is said to be a \emph{left cancellative semi-brace}.
The structures $\left(B, +\right)$ and $\left(B, \circ\right)$ are named the \emph{additive} and the \emph{multiplicative structure} of $B$, respectively. 
\end{defin}

\noindent Denoted by $0$ the identity of the group $\left(B,\circ\right)$, we have that $0$ is an idempotent with respect to the sum. In the cancellative case, it holds also that $0$ is a left identity of the semigroup $\left(B,+\right)$. 
If the additive structure is a group, then $B$ is a \emph{skew brace}, the structure introduced in \cite{GuVe17}.
In this case, the identity of the group $\left(B, +\right)$ is exactly $0$. Furthermore, if $\left(B,+\right)$ is an abelian group, then $B$ is a \emph{brace}.
\medskip

Before illustrating the close link between affine structures and semi-braces, let us recall two fundamental maps in the study of semi-braces and of the solutions associated to them, namely, the maps  
\begin{align*}
\lambda_a:B\to B,\, b\mapsto a\circ\left(a^- + b\right)
\qquad
 \rho_b:B\to B,\, a\mapsto \left(a^- + b\right)^-\circ b,
\end{align*}
for all $a,b\in B$. 
In the following proposition, we collect some properties that are useful for our aims. For a fuller treatment, we refer the reader to \cite{GuVe17}, \cite{CCSt17}, \cite{JeAr19}, and \cite{CaCeSt22}.
\begin{prop}\label{prop:lambda-rho}
    Let $B$ be a semi-brace. Then, the following statements hold:
    \begin{enumerate}
        \item $\lambda_a$ is an endomorphism of $\left(B,+\right)$, for every $a\in B$;
        \item the map $\lambda: B\to \End(B,+), \,a\mapsto\lambda_a$ is a homomorphism from the group $(B, \circ)$ to the monoid of the endomorphisms of $\left(B,+\right)$;
        \item if $B$ is left cancellative, then $\lambda_a$ is bijective, for every $a\in B$;
        \item if $B$ is left cancellative the map $\rho: B\to B^B, \,b\mapsto\rho_b$ is a semigroup anti-homomorphism from $(B, \circ)$  into the monoid of the maps from $B$ into itself;
        \item if $B$ is a skew brace, then $\rho_b$ is bijective, for every $b\in B$.
    \end{enumerate}
\end{prop}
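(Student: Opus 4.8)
The plan is to establish the five statements in order, items (1)--(3) and (5) being short and item (4) carrying the real work.

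For (1), I would expand $\lambda_a(b+c) = a\circ\bigl(a^-+(b+c)\bigr) = a\circ\bigl((a^-+b)+c\bigr)$ by associativity of $+$ and apply \eqref{eq:semi} to the triple $\bigl(a,\,a^-+b,\,c\bigr)$, getting $a\circ(a^-+b)+a\circ(a^-+c) = \lambda_a(b)+\lambda_a(c)$. For (2), I would compute $\lambda_a\lambda_b(c) = a\circ\bigl(a^-+b\circ(b^-+c)\bigr)$ and $\lambda_{a\circ b}(c) = a\circ b\circ\bigl(b^-\circ a^-+c\bigr)$; left-cancelling $a$ in the group $(B,\circ)$ reduces $\lambda_{a\circ b}=\lambda_a\lambda_b$ to the identity $a^-+b\circ(b^-+c) = b\circ\bigl(b^-\circ a^-+c\bigr)$, which follows by applying \eqref{eq:semi} to the right-hand side with the triple $\bigl(b,\,b^-\circ a^-,\,c\bigr)$ together with $b\circ b^-=0$ and $0\circ a^-=a^-$. (In the non-cancellative case $\lambda_0 = \lambda_{0\circ0} = \lambda_0^2$ is merely an idempotent of $\End(B,+)$; this is the correct reading of ``homomorphism to the monoid of endomorphisms''.) For (3), in the left cancellative case $0$ is a left identity of $(B,+)$, so $\lambda_0(b) = 0\circ(0^-+b) = 0\circ(0+b) = b$, that is $\lambda_0 = \id_B$; then (2) yields $\lambda_a\lambda_{a^-} = \lambda_{a^-}\lambda_a = \lambda_0 = \id_B$, so $\lambda_a$ is bijective with inverse $\lambda_{a^-}$. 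For (5), when $B$ is a skew brace $(B,+)$ is a group, hence $\rho_b$ is the composite of the four bijections $a\mapsto a^-$, $x\mapsto x+b$, $y\mapsto y^-$, $z\mapsto z\circ b$ of $B$, and is therefore bijective.

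The heart of the matter is (4), the anti-homomorphism property $\rho_{a\circ b}=\rho_b\rho_a$. I would first record two auxiliary identities valid in any left cancellative semi-brace: (i) $x\circ y = x+\lambda_x(y)$, obtained from $x\circ y = x\circ(0+y) = x\circ0+x\circ(x^-+y)$ via \eqref{eq:semi} and $0+y=y$; and (ii) $x\circ y = \lambda_x(y)\circ\rho_y(x)$, a one-line check since $(x^-+y)\circ(x^-+y)^-=0$, whence $\rho_y(x) = \lambda_x(y)^-\circ x\circ y$. Applying (ii) twice gives $\rho_{a\circ b}(c) = \lambda_c(a\circ b)^-\circ c\circ a\circ b$ and $\rho_b\bigl(\rho_a(c)\bigr) = \lambda_{\rho_a(c)}(b)^-\circ\lambda_c(a)^-\circ c\circ a\circ b$, so after left-cancelling in $(B,\circ)$ statement (4) reduces to the single identity
\[
\lambda_c(a\circ b) = \lambda_c(a)\circ\lambda_{\rho_a(c)}(b).
\]
I would prove this by reducing both sides to a common expression: the left-hand side equals $\lambda_c\bigl(a+\lambda_a(b)\bigr) = \lambda_c(a)+\lambda_c\lambda_a(b) = \lambda_c(a)+\lambda_{c\circ a}(b)$ by (i) and parts (1)--(2), while the right-hand side equals $\lambda_c(a)+\lambda_{\lambda_c(a)}\lambda_{\rho_a(c)}(b) = \lambda_c(a)+\lambda_{\lambda_c(a)\circ\rho_a(c)}(b) = \lambda_c(a)+\lambda_{c\circ a}(b)$ by (i), part (2), and (ii).

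I expect the main obstacle to be exactly this reorganisation of (4): the temptation is to expand $\rho_{a\circ b}(c)$ and $\rho_b\rho_a(c)$ directly in terms of $+$, $\circ$ and $(\cdot)^-$, which leads to an unwieldy computation, whereas recognising that $\rho$ is governed by $\rho_y(x)=\lambda_x(y)^-\circ x\circ y$ collapses the whole statement to one $\lambda$-identity that the normal form $x\circ y = x+\lambda_x(y)$ and parts (1)--(2) settle cleanly. The other four items are essentially direct applications of \eqref{eq:semi} and the group axioms.
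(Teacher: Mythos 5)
Your proof is correct. Note that the paper itself does not prove \cref{prop:lambda-rho}: it states these properties as known and defers to the references \cite{GuVe17}, \cite{CCSt17}, \cite{JeAr19}, and \cite{CaCeSt22}, so there is no in-paper argument to compare against. Your derivation is a sound self-contained reconstruction: items (1)--(3) and (5) are the expected direct applications of \eqref{eq:semi}, associativity of $+$, and (for (3)) the fact, recorded in the paper just before the proposition, that $0$ is a left identity of $\left(B,+\right)$ in the left cancellative case; your parenthetical that $\lambda_0$ is in general only an idempotent of $\End\left(B,+\right)$ is also the right reading of item (2). For item (4), your reduction via the identities $x\circ y = x + \lambda_x\left(y\right)$ and $x\circ y = \lambda_x\left(y\right)\circ\rho_y\left(x\right)$, which collapses $\rho_{a\circ b}=\rho_b\rho_a$ to $\lambda_c\left(a\circ b\right)=\lambda_c\left(a\right)\circ\lambda_{\rho_a\left(c\right)}\left(b\right)$ and then settles that by parts (1)--(2), is exactly the mechanism used in the cited sources (and is consistent with the paper's later remark that $\rho_b\left(a\right)=\sigma_{a^-}\left(b\right)^-\circ a\circ b$). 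I see no gaps.
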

\medskip

\begin{theor}\label{th:aff-skew}
Let $G = \left(B, \circ\right)$ be a group and $\sigma$ an affine structure on $G$. Then, the binary operation given by
\begin{align*}
    a + b:= a\circ\sigma_a\left(b\right),
\end{align*}
for all $a,b\in B$, makes $B$ into a semi-brace that we call the \emph{semi-brace associated to $\sigma$} and denote by $B_{\sigma}$. 
Moreover, if $\sigma$ is cancellative then $B_{\sigma}$ is cancellative and we call it the \emph{cancellative semi-brace associated to $\sigma$}. If in addition $\sigma$ is groupal, then $B_\sigma$ is a skew brace which we call the \emph{skew brace associated to $\sigma$}.
\begin{proof}
At first, note that, by \eqref{eq:associative}, we easily obtain that $\circ$ is associative, hence $\left(B, \circ\right)$ is a semigroup. 
Moreover,
\begin{align*}
    a\circ b + a\circ\left(a^- + c\right)
    &= a\circ b + \sigma_{a^-}\left(c\right)
    =  a\circ b\circ \sigma_{a\circ b}\sigma_{a^-}\left(c\right)
    =  a\circ b\circ\sigma_b\left(c\right)
    = a\circ\left(b + c\right),
\end{align*}
thus \eqref{eq:semi} is satisfied.
Now, if we assume that $\sigma$ is cancellative and $a + b = a + c$, then $a\circ\sigma_a\left(b\right) = a\circ\sigma_a\left(c\right)$ that implies $b = c$, hence $\left(B, +\right)$ is a cancellative semigroup.
Furthermore, if we suppose that $\sigma$ is groupal, we obtain that 
$a + 0 = a\circ\sigma_a\left(0\right) = a$ and, since $\sigma_0=\id_B$, $0 + a = a$. 
Finally, $a + \sigma_{a^-}\left(a^-\right)
    = a\circ\sigma_a\sigma_{a^-}\left(a^-\right)
    = a\circ a^- = 0$ and
\begin{align*}
    \sigma_{a^-}\left(a^-\right) + a
    = \sigma_{a^-}\left(a^-\right)\circ\sigma_{\sigma_{a^-}\left(a^-\right)}\left(a\right)
    = \sigma_{a^-}\left(a^-\circ\sigma_{a^-}\sigma_a\left(a\right)\right) 
    = \sigma_{a^-}\left(0\right) = 0,
\end{align*}
namely, $a$ has opposite $-a = \sigma_{a^-}\left(a^-\right)$.
Therefore the statement is proved.
\end{proof}
\end{theor}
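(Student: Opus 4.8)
The plan is to verify the three assertions in order of increasing strength: first that $\left(B,+\right)$ is a semigroup satisfying \eqref{eq:semi}, so that $B_{\sigma}$ is a semi-brace; then that $+$ is left cancellative when $\sigma$ is cancellative; and finally that $\left(B,+\right)$ is a group with identity $0$ when $\sigma$ is groupal, which (together with the equivalence of \eqref{eq:semi} and \eqref{eq:skew} for group additions recalled in the introduction) makes $B_{\sigma}$ a skew brace. Nothing is needed on the multiplicative side, as $\left(B,\circ\right)$ is a group by hypothesis. Throughout, the only tools are that $\sigma$ is a semigroup anti-homomorphism---so $\sigma_{a\circ b}=\sigma_{b}\sigma_{a}$, and in particular $\sigma_{a}\sigma_{a^{-}}=\sigma_{a^{-}\circ a}=\sigma_{0}$ and $\sigma_{b}\sigma_{0}=\sigma_{0\circ b}=\sigma_{b}$---together with the identity \eqref{eq:associative}.

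For associativity of $+$, I would expand $\left(a+b\right)+c=a\circ\sigma_{a}\left(b\right)\circ\sigma_{a\circ\sigma_{a}\left(b\right)}\left(c\right)$, rewrite $\sigma_{a\circ\sigma_{a}\left(b\right)}=\sigma_{\sigma_{a}\left(b\right)}\sigma_{a}$ by the anti-homomorphism property so that the last two factors become precisely the right-hand side of \eqref{eq:associative} for $a,b,c$; replacing them by its left-hand side gives $\left(a+b\right)+c=a\circ\sigma_{a}\left(b\circ\sigma_{b}\left(c\right)\right)=a\circ\sigma_{a}\left(b+c\right)=a+\left(b+c\right)$, so $\left(B,+\right)$ is a semigroup. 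For \eqref{eq:semi}, I would expand its right-hand side: from $a^{-}+c=a^{-}\circ\sigma_{a^{-}}\left(c\right)$ and $a\circ a^{-}=0$ one gets $a\circ\left(a^{-}+c\right)=\sigma_{a^{-}}\left(c\right)$, hence $a\circ b+a\circ\left(a^{-}+c\right)=a\circ b\circ\sigma_{a\circ b}\sigma_{a^{-}}\left(c\right)=a\circ b\circ\sigma_{b}\sigma_{a}\sigma_{a^{-}}\left(c\right)$; then $\sigma_{a}\sigma_{a^{-}}=\sigma_{0}$ and $\sigma_{b}\sigma_{0}=\sigma_{b}$ collapse this to $a\circ b\circ\sigma_{b}\left(c\right)=a\circ\left(b+c\right)$, the left-hand side. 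Hence $B_{\sigma}$ is a semi-brace.

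For the refinements, if $\sigma$ is cancellative then each $\sigma_{a}$ lies in $\Sym_{B}$, so $a+b=a+c$ forces $\sigma_{a}\left(b\right)=\sigma_{a}\left(c\right)$ and hence $b=c$; thus $\left(B,+\right)$ is left cancellative. Moreover $\sigma_{0}$ is then a bijective idempotent, so $\sigma_{0}=\id_{B}$. If in addition $\sigma$ is groupal, then $a+0=a\circ\sigma_{a}\left(0\right)=a$ and $0+a=\sigma_{0}\left(a\right)=a$, so $0$ is a two-sided identity for $+$; and I would check that $-a:=\sigma_{a^{-}}\left(a^{-}\right)$ is a two-sided additive inverse. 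Indeed $a+\left(-a\right)=a\circ\sigma_{a}\sigma_{a^{-}}\left(a^{-}\right)=a\circ\sigma_{0}\left(a^{-}\right)=a\circ a^{-}=0$, while for $\left(-a\right)+a=0$ I would first use $\sigma_{a^{-}}\sigma_{a}=\sigma_{a\circ a^{-}}=\sigma_{0}=\id_{B}$ to rewrite $a=\sigma_{a^{-}}\sigma_{a}\left(a\right)$ and then apply \eqref{eq:associative} (with $a,b,c$ replaced by $a^{-},a^{-},\sigma_{a}\left(a\right)$), obtaining $\sigma_{a^{-}}\left(a^{-}\right)\circ\sigma_{\sigma_{a^{-}}\left(a^{-}\right)}\left(a\right)=\sigma_{a^{-}}\left(a^{-}\circ\sigma_{a^{-}}\sigma_{a}\left(a\right)\right)=\sigma_{a^{-}}\left(a^{-}\circ a\right)=\sigma_{a^{-}}\left(0\right)=0$. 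Thus $\left(B,+\right)$ is a group with identity $0$, so $B_{\sigma}$ is a skew brace.

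All of these computations are short; the single place that needs attention is the verification of \eqref{eq:semi}, where one must apply the anti-homomorphism identity in the correct direction to recognize that $\sigma_{b}\sigma_{a}\sigma_{a^{-}}=\sigma_{b}\sigma_{0}=\sigma_{b}$, and---in the groupal case---the check that $\sigma_{a^{-}}\left(a^{-}\right)$ is an additive inverse, which goes through only after rewriting $a$ via $\sigma_{a^{-}}\sigma_{a}=\sigma_{0}=\id_{B}$ so that \eqref{eq:associative} can be used from right to left. I do not expect any genuine obstacle beyond this bookkeeping.
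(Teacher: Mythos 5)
Your proposal is correct and follows essentially the same route as the paper: the same expansion of $(a+b)+c$ via the anti-homomorphism property and \eqref{eq:associative}, the same collapse $\sigma_b\sigma_a\sigma_{a^-}=\sigma_b$ for \eqref{eq:semi}, and the same additive inverse $-a=\sigma_{a^-}\left(a^-\right)$ verified by applying \eqref{eq:associative} from right to left. Your explicit justification that $\sigma_0=\id_B$ in the cancellative case (bijective idempotent) is a small but welcome addition that the paper only records as a separate remark.
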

\medskip

\begin{rem}
If $B_\sigma$ is the semi-brace associated to an affine structure $\sigma$, by \eqref{eq:associative} we have that
\begin{align*}
    \sigma_a\left(b+c\right)
    = \sigma_a\left(b\circ\sigma_b\left(c\right)\right)
    = \sigma_a\left(b\right)\circ\sigma_{\sigma_a\left(b\right)}\sigma_a\left(c\right)
    = \sigma_a\left(b\right) + \sigma_a\left(c\right),
\end{align*}
for all $a,b, c\in B$, i.e., $\sigma_a\in\End\left(B, +\right)$, for every $a\in B$.
\end{rem}
\medskip

\begin{theor}\label{th:sk-aff}
Let $B$ be a semi-brace. Then, the map $\sigma:B\to B^B, a\mapsto \lambda_{a^-}$ is an affine structure of $\left(B, \circ\right)$. Moreover, $a + b = a\circ\sigma_a\left(b\right)$, for all $a,b\in B$, namely the semi-brace $B_{\sigma}$ coincides with $B$.
In addition, cancellative affine structures correspond to cancellative semi-braces, and  groupal affine structures correspond to skew braces.
\begin{proof}
Clearly, by \cref{prop:lambda-rho}, we have that $\sigma$ is an anti-homomorphism from the semigroup $\left(B, \circ\right)$ to the monoid $B^B$. If $a,b,c\in B$, we have that 
\begin{align*}
 \sigma_{a}\left(b\circ \sigma_b\left(c\right)\right)
 &= \lambda_{a^-}\left(b + c\right)
 = \lambda_{a^-}\left(b\right) + \lambda_{a^-}\left(c\right)
 =\lambda_{a^-}\left(b\right)\circ\sigma_{\lambda_{a^-}\left(b\right)}\lambda_{a^-}\left(c\right)\\
 &= \sigma_a\left(b\right)\circ \sigma_{\sigma_a\left(b\right)}\sigma_a\left(c\right),  
\end{align*}
i.e., \eqref{eq:associative} is satisfied, and $a\circ\sigma_a\left(b\right) = a\circ\lambda_{a^-}\left(b\right) = a + b$. 
Furthermore, we trivially have that the sum defined as in \cref{th:aff-skew} coincides with the sum of the semi-brace $B$, hence $B_{\sigma}$ coincides with the semi-brace $B$. 
Finally, the remaining part of the proof is trivial.
\end{proof}
\end{theor}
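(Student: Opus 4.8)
The plan is to take $\sigma_a:=\lambda_{a^-}$ as the candidate and verify the three clauses of the definition of an affine structure in turn, using only \cref{prop:lambda-rho} and the already-established \cref{th:aff-skew}; no deep computation is needed, since the genuinely new content is the \emph{dictionary} $a+b=a\circ\sigma_a(b)$, and once that is in place the rest is formal. First I would record the elementary identity $\sigma_a(b)=\lambda_{a^-}(b)=a^-\circ\left(a+b\right)$, which comes straight from the definition of $\lambda$; multiplying on the left by $a$ gives $a\circ\sigma_a(b)=a+b$, which already settles the ``moreover'' part ($B_\sigma=B$) and is the engine for everything else.

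Next, the anti-homomorphism property: $\sigma_{a\circ b}=\lambda_{(a\circ b)^-}=\lambda_{b^-\circ a^-}=\lambda_{b^-}\lambda_{a^-}=\sigma_b\sigma_a$, where the middle equality is exactly the statement that $\lambda\colon(B,\circ)\to\End(B,+)$ is a homomorphism (\cref{prop:lambda-rho}(2)); since $B^B$ is only a monoid, ``semigroup anti-homomorphism'' is all this clause asks. Then comes \eqref{eq:associative}: I would rewrite its left-hand side using the dictionary, $\sigma_a\left(b\circ\sigma_b(c)\right)=\sigma_a(b+c)$, observe that $\sigma_a=\lambda_{a^-}$ is an endomorphism of $(B,+)$ by \cref{prop:lambda-rho}(1), so this equals $\sigma_a(b)+\sigma_a(c)$, and finally apply the dictionary once more with $x=\sigma_a(b)$, $y=\sigma_a(c)$ to turn $x+y$ into $x\circ\sigma_x(y)$, which is precisely the right-hand side of \eqref{eq:associative}. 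Thus that apparently technical identity is nothing but additivity of $\sigma_a$ transported through the correspondence $+\leftrightarrow\circ$.

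For the refinements I would avoid re-deriving anything and simply quote \cref{th:aff-skew} in the converse directions. If $B$ is left cancellative then each $\lambda_{a^-}$ is bijective (\cref{prop:lambda-rho}(3)), hence $\sigma(B)\subseteq\Sym_B$ and $\sigma$ is cancellative; conversely, if $\sigma$ is cancellative then $B_\sigma$ is cancellative by \cref{th:aff-skew} and $B_\sigma=B$. If $B$ is a skew brace then $(B,+)$ is a group, so $B$ is in particular left cancellative (giving $\sigma$ cancellative) and $0$ is a two-sided identity for $+$, whence $\sigma_a(0)=a^-\circ(a+0)=a^-\circ a=0$ and $\sigma$ is groupal; conversely a groupal $\sigma$ yields a skew brace $B_\sigma=B$. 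One may also spell out that the assignments $\sigma\mapsto B_\sigma$ of \cref{th:aff-skew} and $B\mapsto(a\mapsto\lambda_{a^-})$ are mutually inverse: computing $\lambda$ inside $B_\tau$ gives $\lambda_c(b)=c\circ\left(c^-\circ\tau_{c^-}(b)\right)=\tau_{c^-}(b)$, so the affine structure recovered from $B_\tau$ is $a\mapsto\tau_{(a^-)^-}=\tau_a$, making ``correspond'' an honest bijection.

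The only point that calls for care — not really an obstacle — is bookkeeping about the neutral behaviour of $0$ on $(B,+)$: in a general semi-brace $0$ is merely $+$-idempotent, in the cancellative case it is a left identity, and only for a skew brace is it a two-sided identity, so the equality $\sigma_a(0)=0$ must be invoked exactly at the skew-brace level and not before.
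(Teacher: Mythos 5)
Your proposal is correct and follows essentially the same route as the paper: identify $\sigma_a=\lambda_{a^-}$, get the anti-homomorphism property from \cref{prop:lambda-rho}, and derive \eqref{eq:associative} from the additivity of $\lambda_{a^-}$ together with the dictionary $a+b=a\circ\sigma_a(b)$. Your extra care about the neutral behaviour of $0$ and the explicit check that the two assignments are mutually inverse only spell out what the paper dismisses as ``trivial'' (and defers to \cref{rem:aff}), so there is nothing to add.
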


\medskip

\noindent From now on, given a semi-brace $B$, let us call the map $\sigma$ as in \cref{th:sk-aff} the \emph{affine structure associated to $B$} and denote it by $\sigma^{B}$.
\begin{rem}\label{rem:aff}
If $\sigma$ is an affine structure on a group $G = \left(B,\circ\right)$ and $B_\sigma$ the semi-brace associated to $\sigma$, then we clearly have that $\sigma^B = \sigma$.

\end{rem}

\bigskip

As a consequence of \cref{th:aff-skew} and \cref{th:sk-aff}, one can define a bijective correspondence between the class of semi-braces having a fixed group $G$ as multiplicative group and that of affine structures on $G$ itself.
\begin{cor}
Let $G =\left(B,\circ\right)$ be a group, $\mathcal{S}$ the class of semi-braces having $G$ as multiplicative group, and  $\mathcal{A}$ the class of affine structures on $G$. Then, the map  defined by 
\begin{align*}
   \psi:\mathcal{S}\to\mathcal{A}, \
   B\mapsto \sigma^B
\end{align*}
is a bijection.
\end{cor}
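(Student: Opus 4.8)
The plan is to show that $\psi \colon \mathcal{S} \to \mathcal{A}$, $B \mapsto \sigma^B$, is both injective and surjective by exhibiting an explicit inverse, namely the map $\mathcal{A} \to \mathcal{S}$ sending an affine structure $\sigma$ on $G$ to the semi-brace $B_\sigma$ of \cref{th:aff-skew}. Everything needed has already been assembled in the two preceding theorems; the corollary is essentially a bookkeeping statement that these two constructions are mutually inverse. First I would note that both maps are well defined: by \cref{th:sk-aff}, $\sigma^B$ is indeed an affine structure on $G = \left(B,\circ\right)$, so $\psi$ does land in $\mathcal{A}$; and by \cref{th:aff-skew}, for $\sigma \in \mathcal{A}$ the operation $a + b := a\circ\sigma_a\left(b\right)$ makes $B$ into a semi-brace whose multiplicative group is precisely $G$, so $B_\sigma \in \mathcal{S}$.

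Next I would check the two round-trip identities. For $\sigma \in \mathcal{A}$, \cref{rem:aff} already records that $\sigma^{B_\sigma} = \sigma$, which says the composite $\mathcal{A} \to \mathcal{S} \to \mathcal{A}$ is the identity; this gives surjectivity of $\psi$ immediately, since any $\sigma$ is hit by $B_\sigma$. For the other direction, take $B \in \mathcal{S}$ and form $\sigma^B$, where $\sigma^B_a = \lambda_{a^-}$. By the ``moreover'' part of \cref{th:sk-aff}, the semi-brace $B_{\sigma^B}$ has underlying set $B$, the same multiplicative group $\left(B,\circ\right)$, and additive operation $a \circ \sigma^B_a(b) = a \circ \lambda_{a^-}(b) = a + b$, i.e. it is the original semi-brace $B$. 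Hence the composite $\mathcal{S} \to \mathcal{A} \to \mathcal{S}$ is also the identity, and in particular $\psi$ is injective: if $\sigma^{B_1} = \sigma^{B_2}$ then $B_1 = B_{\sigma^{B_1}} = B_{\sigma^{B_2}} = B_2$ as semi-braces (same set, same $\circ$, same $+$). Combining injectivity and surjectivity yields that $\psi$ is a bijection.

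There is no real obstacle here — the content lies entirely in \cref{th:aff-skew}, \cref{th:sk-aff}, and \cref{rem:aff}, and the corollary is a one-line consequence. The only point requiring a moment of care is bookkeeping about what ``equal'' means on each side: an element of $\mathcal{S}$ is a set $B$ together with two operations $+$ and $\circ$, so to conclude $B_{\sigma^B} = B$ one must verify that $B_{\sigma^B}$ and $B$ agree as sets, agree in the operation $\circ$ (both are the fixed group $G$, by construction), and agree in the operation $+$ (this is the displayed computation $a \circ \lambda_{a^-}(b) = a + b$ from \cref{th:sk-aff}). Once that is spelled out, the statement follows, and it is natural to remark in passing that the inverse bijection is $\sigma \mapsto B_\sigma$.
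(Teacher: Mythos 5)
Your proposal is correct and follows exactly the route the paper intends: the corollary is stated as an immediate consequence of \cref{th:aff-skew}, \cref{th:sk-aff}, and \cref{rem:aff}, with $\sigma \mapsto B_\sigma$ as the inverse and the two round-trip identities $\sigma^{B_\sigma}=\sigma$ and $B_{\sigma^B}=B$ supplying bijectivity. You have merely spelled out the bookkeeping that the paper leaves implicit.
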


\medskip 

The following theorem shows a linkage between homomorphic classes of semi-braces and homomorphic  classes of affine structures.
\begin{theor}\label{th:aff_semi-braces}
If $B, C$ are semi-braces and $f$ is a homomorphism between $B$ and $C$, then the affine structures $\sigma^B$ and $\sigma^C$ are homomorphic via $f$.\\
Conversely, if $\sigma, \varphi$ are affine structures on the groups $G = \left(B, \circ\right)$ and $H = \left(C, \circ\right)$, respectively, and $f$ is a homomorphism between $\sigma$ and $\varphi$, then $f$ is a homomorphism from the semi-brace $B_\sigma$ to $C_\varphi$.   
\begin{proof}
Initially, assume that $B, C$ are semi-braces and $f$ is an homomorphism between $B$ and $C$. Then, if $a,b\in B$, we have that
 \begin{align*}
    f\sigma^B_{a}\left(b\right)
     = f\left(a^-\circ\left(a + b\right)\right)
    = f\left(a\right)^-\circ\left(f\left(a\right) +        f\left(b\right)\right)
     = \sigma^C_{f\left(a\right)}f\left(b\right),
\end{align*}
i.e., $\sigma^B$ and $\sigma^C$ are homomorphic via $f$.\\
Conversely, let $\sigma,\varphi$ affine structures on $G$ and $H$, respectively, and assume that they are homomorphic via a group homomorphism $f$ from $G$ to $H$, thus $\varphi_{f\left(a\right)}f = f\sigma_{a}$, for every $a\in B$. If $a,b\in B$, it follows that
\begin{align*}
    f\left(a+b\right)
    = f\left(a\circ \sigma_a\left(b\right)\right)
    = f\left(a\right)\circ f\sigma_a\left(b\right)
    = f\left(a\right)\circ \varphi_{f\left(a\right)}f\left(b\right)
    = f\left(a\right) + f\left(b\right),
\end{align*}
hence $f$ is a semi-brace homomorphism from $B$ to $C$. Therefore, the claim follows.
\end{proof}
\end{theor}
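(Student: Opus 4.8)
The plan is to prove both implications by direct computation, relying on the explicit description $\sigma^B_a=\lambda_{a^-}$ furnished by \cref{th:sk-aff}, the resulting formula $\sigma^B_a(b)=\lambda_{a^-}(b)=a^-\circ(a+b)$, and the definition $a+b=a\circ\sigma_a(b)$ of the additive operation of $B_\sigma$ from \cref{th:aff-skew}. No conceptual machinery beyond these identities and the bookkeeping conventions (a semi-brace homomorphism respects both $+$ and $\circ$; a homomorphism between affine structures is by definition a group homomorphism $f$ with $\varphi_{f(a)}f=f\sigma_a$) seems to be needed, so the argument should be short.

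For the forward direction I would start from a semi-brace homomorphism $f\colon B\to C$. Since $f$ is in particular a group homomorphism for $\circ$, it satisfies $f(a^-)=f(a)^-$. Then, for all $a,b\in B$,
\[
f\sigma^B_a(b)
= f\bigl(a^-\circ(a+b)\bigr)
= f(a)^-\circ\bigl(f(a)+f(b)\bigr)
= \sigma^C_{f(a)}f(b),
\]
using that $f$ preserves $\circ$ in the second equality and $+$ in the same step, and the formula $\sigma^C_u(v)=u^-\circ(u+v)$ in the last. This is exactly the identity $f\sigma^B_a=\sigma^C_{f(a)}f$, i.e. $\sigma^B$ and $\sigma^C$ are homomorphic via $f$.

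For the converse I would take affine structures $\sigma$ on $G=(B,\circ)$ and $\varphi$ on $H=(C,\circ)$ together with a group homomorphism $f$ satisfying $\varphi_{f(a)}f=f\sigma_a$ for every $a\in B$. Since $f$ already preserves $\circ$, it suffices to check that it preserves the additive operations of $B_\sigma$ and $C_\varphi$; for all $a,b\in B$,
\[
f(a+b)
= f\bigl(a\circ\sigma_a(b)\bigr)
= f(a)\circ f\sigma_a(b)
= f(a)\circ\varphi_{f(a)}f(b)
= f(a)+f(b),
\]
so $f$ is a homomorphism of semi-braces from $B_\sigma$ to $C_\varphi$.

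I do not anticipate a genuine obstacle: both halves are formal manipulations of the defining identities, and the only mildly delicate point is to make the two sets of conventions meet cleanly, so that ``respecting $+$ and $\circ$'' on the semi-brace side corresponds precisely to ``group homomorphism intertwining $\sigma$ and $\varphi$'' on the affine side. Once the formula $\sigma^B_a(b)=a^-\circ(a+b)$ is recorded, each direction is a one-line chain of equalities.
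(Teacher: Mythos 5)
Your proof is correct and follows essentially the same route as the paper: the forward direction uses the formula $\sigma^B_a(b)=a^-\circ(a+b)$ together with the fact that $f$ preserves both operations, and the converse uses $a+b=a\circ\sigma_a(b)$ together with the intertwining relation $\varphi_{f(a)}f=f\sigma_a$. Both chains of equalities match the paper's argument step for step.
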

\noindent As a consequence of \cref{th:aff_semi-braces}, we also have that the affine structures associated to two isomorphic semi-braces are equivalent and  vice versa.
\medskip

Considering the class $\mathcal{O}b(\mathcal{SB})$ of semi-braces and the class $\mathcal{M}or(\mathcal{SB})$ of all homomorphisms between semi-braces, let us denote by $\mathcal{SB}$ the \emph{category of semi-braces}.
Moreover, if $\mathcal{O}b(\mathcal{A})$ is the class of all affine structures and $\mathcal{M}or(\mathcal{A})$ the class of all homomorphisms between affine structures, 
let us denote by $\mathcal{A}$ the \emph{category of affine structures}. 
\begin{cor}
The categories $\mathcal{SB}$ and $\mathcal{A}$ are equivalent.
\begin{proof}
By \cref{th:sk-aff} and \cref{th:aff-skew}, one can define the maps $F_1:\mathcal{O}b(\mathcal{SB})\to\mathcal{O}b(\mathcal{A})$ and $F_2:\mathcal{M}or(\mathcal{SB})\to\mathcal{M}or(\mathcal{A})$ such that $F_1\left(B\right) = \sigma^B$, for every $B\in\mathcal{O}b(\mathcal{SB})$, and $F_2\left(f\right) =f$, for every $f\in\mathcal{M}or(\mathcal{SB})$. Thus, by \cref{th:aff_semi-braces}, we have that $F:= \left(F_1, F_2\right)$ is a functor from the category $\mathcal{SB}$ to the category $\mathcal{A}$. 
Analogously, one can define the maps $G_1:\mathcal{O}b(\mathcal{A})\to\mathcal{O}b(\mathcal{SB})$ and $G_2:\mathcal{M}or(\mathcal{A})\to\mathcal{M}or(\mathcal{SB})$ such that $G_1\left(\sigma\right) = B_\sigma$, for every $\sigma\in\mathcal{O}b(\mathcal{A})$, and $G_2\left(f\right) =f$, for every $f\in\mathcal{M}or(\mathcal{A})$, so that 
$G:= \left(G_1, G_2\right)$ is a functor from the category $\mathcal{A}$ to the category $\mathcal{SB}$.
Furthermore, by \cref{rem:aff} and \cref{th:sk-aff},  it follows that
$FG = 1_{\mathcal{A}}$ and $GF = 1_{\mathcal{SB}}$, where $1_{\mathcal{A}}$ and $1_{\mathcal{SB}}$ are the identity functor on $\mathcal{A}$ and $\mathcal{SB}$, respectively. Therefore, the claim follows.
\end{proof}
\end{cor}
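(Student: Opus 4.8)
The plan is to produce two mutually inverse functors $F\colon\mathcal{SB}\to\mathcal{A}$ and $G\colon\mathcal{A}\to\mathcal{SB}$; this yields an \emph{isomorphism} of categories, which is a fortiori an equivalence, so it suffices to set these up and check that the two composites are identity functors.

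First I would define $F$ on objects by $F(B)=\sigma^B$, the affine structure associated to the semi-brace $B$ via \cref{th:sk-aff}, and on morphisms by $F(f)=f$, i.e.\ $F$ leaves the underlying set map unchanged. To see that $F$ is well defined on morphisms, I would invoke the first assertion of \cref{th:aff_semi-braces}: if $f$ is a semi-brace homomorphism $B\to C$, then $\sigma^B$ and $\sigma^C$ are homomorphic via $f$, so $f$ is a morphism $\sigma^B\to\sigma^C$ in $\mathcal{A}$. Functoriality, namely $F(\id)=\id$ and $F(g\circ f)=F(g)\circ F(f)$, is immediate since composition in both categories is composition of the underlying maps, and the source and target of $F(f)$ depend only on the source and target of $f$.

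Symmetrically, I would set $G(\sigma)=B_\sigma$ on objects, the semi-brace associated to $\sigma$ via \cref{th:aff-skew}, and $G(f)=f$ on morphisms; well-definedness on morphisms is the converse assertion of \cref{th:aff_semi-braces}, and functoriality is again immediate. Finally I would check $FG=1_{\mathcal{A}}$ and $GF=1_{\mathcal{SB}}$: on objects this is exactly \cref{rem:aff} (which gives $\sigma^{B_\sigma}=\sigma$) together with the part of \cref{th:sk-aff} asserting that $B_{\sigma^B}$ coincides with $B$ as a semi-brace; on morphisms both composites act as the identity because the underlying maps are never altered. Hence $\mathcal{SB}$ and $\mathcal{A}$ are isomorphic as categories, and in particular equivalent.

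I do not anticipate a genuine obstacle: all the substance has already been isolated in \cref{th:aff-skew}, \cref{th:sk-aff}, \cref{th:aff_semi-braces}, and \cref{rem:aff}. The only point requiring a shade of care is that the morphism classes must match on the nose, i.e.\ a set map between the underlying sets of two associated structures has to be a semi-brace homomorphism exactly when it is a homomorphism of affine structures; but this bidirectional statement is precisely what \cref{th:aff_semi-braces} supplies, so the corollary is essentially a bookkeeping consequence of the results already proved.
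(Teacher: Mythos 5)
Your proposal is correct and follows essentially the same route as the paper: both define $F(B)=\sigma^B$, $G(\sigma)=B_\sigma$, act as the identity on morphisms, justify well-definedness via \cref{th:aff_semi-braces}, and conclude $FG=1_{\mathcal{A}}$, $GF=1_{\mathcal{SB}}$ from \cref{rem:aff} and \cref{th:sk-aff}. Your explicit remark that this is in fact an isomorphism of categories (hence a fortiori an equivalence) matches the paper's own parenthetical claim in \cref{section-semi-aff}.
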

\medskip

Let us conclude this section by illustrating that the existence of a cancellative affine structure on a group $G$ is strictly linked to a Zappa-Sz\'{e}p product of $G$ and $G$ as semigroups. 
For this purpose and to fix the notation, let us recall the Zappa-Sz\'{e}p product of two semigroups (see \cite{Ku83} and \cite{Za42}).
Let $\left(S,\circ\right), \left(T,\circ\right)$ be two semigroups, and $\eta:T\to S^{S}$ and $\delta:S\to T^{T}$ maps. Set $^u a:= \eta\left(u\right)\left(a\right)$ and $u^a:= \delta\left(a\right)\left(u\right)$, for all $a\in S$ and $u\in T$, if the following conditions hold
\begin{align}
    \label{S1}
	&^{u}(a\circ b) = {}^u a \circ {}^{u^a}b	
	&^{u\circ v}a = {}^u\left(^va\right) \tag{Z1}\\
	\label{S2}
	&\left(u\circ v\right)^a = u^{^va}\circ v^a
	&u^{a\circ b}=(u^a)^b\tag{Z2}
\end{align}
for all $a,b \in S$ and $u,v \in T$, then $S \times T$ is a semigroup with respect to the operation defined by
\begin{align}\label{prod-semigruppo-match}
	\left(a,u\right)\circ \left(b,v\right)
	=\left(a\circ {}^{u}b, u^b\circ v\right),
\end{align}
for all $a,b \in S$ and $u,v \in T$, called the \emph{Zappa-Sz\'{e}p product of $S$ and $T$ via $\eta$ and $\delta$}.
In addition, if $S$ and $T$ are groups 
and  $\eta(T)\subseteq \Sym(S)$ and $\delta(S)\subseteq \Sym(T)$, then $S\times T$ endowed with the operation \eqref{prod-semigruppo-match} is a group.
\medskip

Below we show that, given a group $G$, specific
semigroup Zappa-Sz\'{e}p  products of $G$ and $G$ determine affine structures on $G$. Moreover, the converse is true if  we consider cancellative affine structures.
We remark that this result is intrinsic in \cite[Lemma 6.9]{SmVe18} and \cite[Theorem 5.3]{DeC19} in the context of skew braces, and in
\cite[Section 2]{Ru20-3} for  braces.
\begin{prop}\label{prop:Zappa}
    Let $G = \left(B, \circ\right)$ be a group.
    If $G\bowtie G$ is the Zappa-Sz\'{e}p  product of $G$ and $G$ via $\eta:B\to B^B$ and $\delta:B\to B^B$ such that 
    \begin{align}\label{eq:prop-prod}
        u\circ a ={}^ua \circ {u}^{a},
    \end{align}
    for all $a\in S$ and $u\in T$, then the map $\sigma:B\to B^B$ defined by $\sigma_u\left(a\right) = {}^{u^-}a$, for every $u\in B$, is an affine structure on $G$.\\ 
    Conversely, if $\sigma$ is a cancellative affine structure on $G$, considered $\eta:B\to B^B$ and $\delta:B\to\Sym_B$ the maps defined by 
    ${}^ua:= \sigma_{u^-}\left(a\right)$ and ${u}^{a}:= \left({}^ua\right)^-\circ u\circ a$, for all $a,u\in B$, respectively,  then \eqref{eq:associative} holds and $G\times G$ endowed with the operation \eqref{prod-semigruppo-match} is the Zappa-Sz\'{e}p  product of $G$ and $G$ via $\eta$ and $\delta$. 
     
    \begin{proof}
    At first assume that $Z$ is the Zappa product of $G$ and $G$ via $\eta:B\to B^B$ and $\delta:B\to B^B$ such that \eqref{eq:prop-prod}  is satisfied.
    Then, if $a,b,u\in B$, 
    we have that
    \begin{align*}
        \sigma_u\left(a\circ\sigma_a\left(b\right)\right)
        &= {}^{u^-}\left(a\circ { }^{a^-}b\right)
        = {}^{u^-}a\circ {}^{\left(u^-\right)^a\circ a^-}b
        = \sigma_u\left(a\right)\circ{}^{\sigma_u\left(a\right)^-\circ u^-}\left(b\right)\\ 
        &= \sigma_u\left(a\right)\circ\sigma_{\sigma_u\left(a\right)}\sigma_u\left(b\right),
    \end{align*}
    hence $\sigma$ is an affine structure on $G$.\\
    Conversely, suppose that $\sigma$ is a cancellative affine structure on $G$.  Then, \eqref{eq:prop-prod} is obvious. Besides, if $a,b,u,v\in B$, clearly
    $^{u\circ v}a ={}^{u}\left(^{v}a\right)$ and,  since $\sigma_{0} = \id_B$,
    \begin{align*}
      u^{a\circ b}
      &= \sigma_{u^-}\left(a\circ b\right)^-\circ u \circ a\circ b
      = \sigma_{\left( \sigma_{u^-}\left(a\right)^-\circ u\circ a \right)^-}\left(b\right)^-\circ \sigma_{u^-}\left(a\right)^-\circ u\circ a\circ b\\
      &= \left({}^{u^a}b\right)^-
      \circ
      \left({}^{u}a\right)^-\circ u\circ a\circ b
      = \left({}^{u^a}b\right)^-\circ u^{a}\circ b 
      = \left(u^a\right)^b.
    \end{align*}
    In addition, 
    \begin{align*}
        ^{u}\left(a\circ b\right)
        = \sigma_{u^-}\left(a\circ\sigma_a\sigma_{a^-}\left(b\right)\right)
        = \sigma_{u^-}\left(a\right)\circ
        \sigma_{\left(u^a\right)^-}\left(b\right)
        =\, ^{u} a \circ\, ^{u^a}b.
    \end{align*}
    and
    \begin{align*}
        u^{^va}\circ v^a
        = \left(\sigma_{u^-}\sigma_{v^-}\left(a\right)\right)^-\circ u
        \circ v\circ a
        = \sigma_{\left(u\circ v\right)^-}\left(a\right)^-\circ u\circ v\circ a
        = \left(u\circ v\right)^{a},
    \end{align*}
    thus \eqref{S1} and \eqref{S2} are satisfied.
    Therefore, the claim follows.
    \end{proof}
\end{prop}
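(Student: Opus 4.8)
The plan is to verify the two directions of Proposition~\ref{prop:Zappa} by direct manipulation of the defining identities, using the group structure of $G$ throughout. For the first (``only if'') direction, I start from a Zappa--Sz\'ep product $G\bowtie G$ via $\eta,\delta$ satisfying \eqref{eq:prop-prod}, i.e.\ $u\circ a = {}^ua\circ u^a$. I define $\sigma_u(a):={}^{u^-}a$ and must show \eqref{eq:associative} holds (that $\sigma$ is a semigroup anti-homomorphism follows immediately from the second identity in \eqref{S1}, since $\sigma_{u\circ v}(a)={}^{(u\circ v)^-}a = {}^{v^-\circ u^-}a = {}^{v^-}({}^{u^-}a)=\sigma_v\sigma_u(a)$). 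For \eqref{eq:associative} I compute $\sigma_u(a\circ\sigma_a(b)) = {}^{u^-}(a\circ {}^{a^-}b)$ and apply the first identity of \eqref{S1} to split this as ${}^{u^-}a\circ {}^{(u^-)^a\circ a^-}b$. The key observation is then that $(u^-)^a\circ a^- = \sigma_u(a)^-\circ u^-$: this should follow from \eqref{eq:prop-prod} applied to the pair $(u^-,a)$, which gives $u^-\circ a = {}^{u^-}a\circ (u^-)^a = \sigma_u(a)\circ (u^-)^a$, hence $(u^-)^a = \sigma_u(a)^-\circ u^-\circ a$ and so $(u^-)^a\circ a^- = \sigma_u(a)^-\circ u^-$. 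Substituting and rewriting ${}^{\sigma_u(a)^-\circ u^-}b = {}^{(\sigma_u(a)\circ u)^-}b = \sigma_{\sigma_u(a)\circ u}(b)=\sigma_u\sigma_{\sigma_u(a)}(b)$ via the anti-homomorphism property yields exactly $\sigma_u(a)\circ\sigma_{\sigma_u(a)}\sigma_u(b)$, which is \eqref{eq:associative}.

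For the converse direction, I assume $\sigma$ is a cancellative affine structure and set ${}^ua:=\sigma_{u^-}(a)$ and $u^a:=({}^ua)^-\circ u\circ a$. Then \eqref{eq:prop-prod} is immediate by construction, and I must verify the four Zappa--Sz\'ep axioms \eqref{S1}--\eqref{S2}. The identity ${}^{u\circ v}a = {}^u({}^va)$ is again just the anti-homomorphism property of $\sigma$. The remaining three — ${}^u(a\circ b)={}^ua\circ {}^{u^a}b$, $u^{a\circ b}=(u^a)^b$, and $(u\circ v)^a = u^{{}^va}\circ v^a$ — are where \eqref{eq:associative} and the groupal-type normalization $\sigma_0=\id_B$ (which holds in the cancellative case by the remark following the definition) get used. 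For ${}^u(a\circ b)$, I write $a\circ b = a\circ \sigma_a\sigma_{a^-}(b)$, apply \eqref{eq:associative} with the triple $(u^-,a,\sigma_{a^-}(b))$ to get $\sigma_{u^-}(a)\circ\sigma_{\sigma_{u^-}(a)}\sigma_{u^-}\sigma_{a^-}(b)$, and then recognize $\sigma_{\sigma_{u^-}(a)}\sigma_{u^-}\sigma_{a^-} = \sigma_{\sigma_{u^-}(a)\circ u^-\circ a^-}$... more carefully, $\sigma_{u^-}\sigma_{a^-}=\sigma_{a^-\circ u^-}$ and I need this to be $\sigma_{(u^a)^-}$; since $u^a = ({}^ua)^-\circ u\circ a = \sigma_{u^-}(a)^-\circ u\circ a$, one checks $(u^a)^- = a^-\circ u^-\circ\sigma_{u^-}(a)$, so $\sigma_{(u^a)^-}=\sigma_{\sigma_{u^-}(a)}\sigma_{u^-}\sigma_{a^-}$ by the anti-homomorphism property, matching the needed form ${}^{u^a}b=\sigma_{(u^a)^-}(b)$. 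The identities $u^{a\circ b}=(u^a)^b$ and $(u\circ v)^a = u^{{}^va}\circ v^a$ are then bookkeeping: expand each side by definition, insert the anti-homomorphism relation $\sigma_{x^-}\sigma_{y^-}=\sigma_{(x\circ y)^-}$ at the right spot, and cancel using the group structure (and $\sigma_0=\id_B$ where a factor collapses to $0$).

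The main obstacle I expect is not any single algebraic identity but keeping the bookkeeping of inverses and composition orders straight: the anti-homomorphism convention means $\sigma_{u\circ v}=\sigma_v\sigma_u$, and the action-on-the-left exponent ${}^ua=\sigma_{u^-}(a)$ introduces a further inversion, so at each step one must be careful that ``$x^-\circ u^-$'' versus ``$u^-\circ x^-$'' lands on the correct side. The conceptual content is entirely contained in the single substitution $(u^-)^a\circ a^- = \sigma_u(a)^-\circ u^-$ (and its mirror in the converse, $u^a = \sigma_{u^-}(a)^-\circ u\circ a$), which translates \eqref{eq:prop-prod} into the shift between $\sigma_u$ and $\sigma_{\sigma_u(a)\circ u}$ that \eqref{eq:associative} demands; once that is isolated, both directions are routine verifications, and the ``in particular'' clauses (cancellative $\leftrightarrow$ $\eta,\delta$ valued in $\Sym_B$, groupal $\leftrightarrow$ the normalization) follow directly from the observations already recorded before the statement.
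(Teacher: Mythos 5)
Your proposal is correct and follows essentially the same route as the paper: the forward direction rests on exactly the same key substitution $(u^-)^a\circ a^- = \sigma_u(a)^-\circ u^-$ extracted from \eqref{eq:prop-prod} together with \eqref{S1}, and the converse likewise inserts $\sigma_a\sigma_{a^-}=\id_B$ and applies \eqref{eq:associative} to verify \eqref{S1} and \eqref{S2}. Just fix the small transposition in your final rewriting: $\sigma_u(a)^-\circ u^- = \left(u\circ\sigma_u(a)\right)^-$, so ${}^{\sigma_u(a)^-\circ u^-}b = \sigma_{u\circ\sigma_u(a)}(b) = \sigma_{\sigma_u(a)}\sigma_u(b)$ by the anti-homomorphism property, which is the expression you ultimately (correctly) state.
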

\medskip

\begin{rem}
    Observe that the maps $\eta$ and $\delta$ involved in \cref{prop:Zappa} coincide with the known maps $\lambda$ and $\rho$ in \cref{prop:lambda-rho}, respectively. Indeed, by \cref{th:sk-aff}, we have that ${}^ua = \lambda_u\left(a\right)$ and 
    $u^a = \lambda_u\left(a\right)^-\circ u\circ a = \left(u^- + a\right)^-\circ a = \rho_a\left(u\right)$.
\end{rem}
\medskip

\section{The special cases of skew braces}

In this section, we analyze the skew braces obtained by the examples of groupal affine structures provided in \cref{Section-aff}.
\medskip

Initially, let us recall that,  according to \cite[Definition 2]{Ch19} (removing the assumption of finiteness),   a skew brace $B$ is said to be a \emph{bi-skew brace} if, other than  condition \eqref{eq:skew}, 
the identity
\begin{align}\label{cond-bi-skew-I}
    a + b\circ c 
    = \left(a + b\right)\circ a^-\circ \left(a + c\right)\tag{$\ast '$}
\end{align}
holds, for all $a,b,c\in B$. 
Equivalently, we observe that a skew brace $B$ is a bi-skew brace if the identity
\begin{align}\label{cond-bi-skew-II}
    a + b\circ c 
    = \left(a + b\right)\circ
    \left(a + \left(-a\right)\circ c\right)
    \tag{$\ast ''$}
\end{align}
holds, for all $a,b,c\in B$, that is the relation consistent with \eqref{eq:skew}. Easy examples of bi-skew braces are those obtained starting from a group $\left(B,\circ\right)$ and by setting $a + b:= a\circ b$ or  $a + b:= b\circ a$, for all $a,b\in B$, that, according to the terminology in \cite{KoTr20} and \cite{CaMaMiSt22} are called \emph{trivial skew brace} and \emph{almost trivial skew brace}, respectively.
\medskip

Below, we provide the following characterization that allows one for obtaining all bi-skew braces on a given group. In particular, we have that any bi-skew brace on a fixed group $G$ corresponds to specific semidirect products of $G$ with itself.
\begin{prop}\label{prop-bi-skew}
    Let $G = \left(B,\circ\right)$ be a group and $\sigma$ a groupal affine structure on $G$. Then, $B_{\sigma}$ is a bi-skew brace if and only if $\sigma^B_a$ is an automomorphism of $\left(B,\circ\right)$, for every $a\in B$.
    \begin{proof}
    Observe that $B_\sigma$ is a bi-skew brace if and only if the condition \eqref{cond-bi-skew-I} is satisfied.
    Since
    $a + b\circ c = a\circ\sigma^B_a\left(b\circ c\right)$ and
    \begin{align*}
        \left(a + b\right)\circ a^-\circ\left(a + c\right)
        = a\circ\sigma^B_a\left(b\right)\circ\sigma^B_a\left(c\right),
    \end{align*}
    for all $a, b, c\in B$, the statement follows by comparing the two equalities obtained.
    \end{proof}
\end{prop}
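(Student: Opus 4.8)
The plan is to show that $B_\sigma$ is a bi-skew brace precisely when each $\sigma^B_a$ respects the multiplication $\circ$, by directly rewriting both sides of the defining identity \eqref{cond-bi-skew-I} in terms of $\sigma$ and $\circ$. First I would recall, via \cref{th:aff-skew}, that the addition on $B_\sigma$ is $a+b = a\circ\sigma^B_a(b)$, where $\sigma^B = \sigma$ (cf. \cref{rem:aff}) is a groupal affine structure, so that $B_\sigma$ is indeed a skew brace and \eqref{eq:skew} holds automatically. Thus $B_\sigma$ being a bi-skew brace is equivalent to the extra identity \eqref{cond-bi-skew-I}, and it suffices to analyze when \eqref{cond-bi-skew-I} holds for all $a,b,c\in B$.

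Next I would expand the left-hand side as $a + b\circ c = a\circ\sigma_a(b\circ c)$. For the right-hand side, I would expand $a+b = a\circ\sigma_a(b)$ and $a+c = a\circ\sigma_a(c)$, so that
\begin{align*}
\left(a+b\right)\circ a^-\circ\left(a+c\right) = a\circ\sigma_a(b)\circ a^-\circ a\circ\sigma_a(c) = a\circ\sigma_a(b)\circ\sigma_a(c).
\end{align*}
Cancelling the common left factor $a$ in the group $(B,\circ)$, \eqref{cond-bi-skew-I} becomes equivalent to
\begin{align*}
\sigma_a(b\circ c) = \sigma_a(b)\circ\sigma_a(c)\qquad\text{for all }b,c\in B.
\end{align*}
Since $\sigma$ is groupal, each $\sigma_a$ lies in $\Sym_B$ and fixes $0$; combined with the displayed multiplicativity, this says exactly that $\sigma_a$ is a bijective endomorphism, i.e. an automorphism of $(B,\circ)$. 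Quantifying over $a$ gives the claimed equivalence.

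This argument is essentially a two-line computation plus a cancellation, so there is no real obstacle; the only point requiring a little care is checking that the reverse implication also goes through — namely that if every $\sigma_a$ is an automorphism of $(B,\circ)$ then \eqref{cond-bi-skew-I} genuinely holds — but this is immediate by reading the same chain of equalities backwards. I would also remark in passing that the equivalent form \eqref{cond-bi-skew-II} can be handled identically, since it is just \eqref{cond-bi-skew-I} rewritten using $-a = \sigma_{a^-}(a^-)$ and is logically equivalent to it.
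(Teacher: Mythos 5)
Your proof is correct and follows essentially the same route as the paper: expand both sides of \eqref{cond-bi-skew-I} using $a+b = a\circ\sigma^B_a(b)$, cancel the leading $a$, and observe that the resulting identity $\sigma^B_a(b\circ c) = \sigma^B_a(b)\circ\sigma^B_a(c)$ together with bijectivity (from groupality) is exactly the automorphism condition. The extra care you take with the reverse implication and the remark on \eqref{cond-bi-skew-II} are fine but add nothing beyond the paper's argument.
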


\noindent In other words, as one can expect, a skew brace $B$ is a bi-skew brace if and only if $\lambda_a\in\Aut\left(B,\circ\right)$, for every $a\in B$.
In addition, if $\psi_B$ is the groupal affine associated to the skew brace $\left(B, \circ, +\right)$, then $\psi^{B}_{a} = \sigma^{B}_{a^-}$, for every $a\in B$. In fact, if $a,b\in B$, then 
$a\circ b = a + \psi^{B}_{a}\left(b\right)$ and so
$\psi^{B}_{a}\left(b\right) = \lambda_a\left(b\right)
= \sigma^{B}_{a^-}\left(b\right)$.
\medskip

The following is another characterization of skew braces that are bi-skew braces. In particular, by \cref{th:sk-aff}, such a characterization is consistent with the results contained in \cite[Theorem 3.1]{Ca20}
and in \cite[Proposition 3.6]{BaNeYa22x}.
\begin{cor}
Let $B$ be a skew brace. Then, $B$ is a bi-skew brace if and and only if 
$\sigma_{a\circ\sigma_a\left(b\right)} = \sigma_{b\circ a}$,
for all $a,b\in B$, or, equivalently $\sigma^B$ is a homomorphism from $\left(B,+\right)$ to $\Aut\left(B,+\right)$.
\begin{proof}
Let $a,b,c\in B$. If $B$ is a bi-skew brace, by \cref{prop-bi-skew},
\begin{align*}
    \sigma^B_{a}\left(b\right)\circ \sigma^B_{\sigma^B_{a}\left(b\right)}\sigma^B_{a}\left(c\right)
    = \sigma^B_{a}\left(b\circ\sigma^B_{b}\left(c\right)\right)
    = 
    \sigma^B_{a}\left(b\right)\circ\sigma^B_{a}\sigma^B_{b}\left(c\right)
\end{align*}
i.e., $\sigma^B_{a\circ\sigma_a\left(b\right)} = \sigma^B_{b\circ a}$. 
Conversely, if we assume that $\sigma^B_{a\circ\sigma_a\left(b\right)} = \sigma^B_{b\circ a}$, for all $a,b\in B$, we obtain that
\begin{align*}
    \sigma^B_{a}\left(b\right)\circ\sigma^B_{a}\left(c\right)
    &=  \sigma^B_{a}\left(b\right)\circ\sigma^B_{b\circ a}\sigma^B_{b^-}\left(c\right)
    = \sigma^B_{a}\left(b\right)\circ\sigma^B_{a+b}\sigma^B_{b^-}\left(c\right)\\
    &= \sigma^B_{a}\left(b\right)\circ\sigma^B_{\sigma^B_{a}\left(b\right)}\sigma^B_{a}\left(\sigma^B_{b^-}\left(c\right)\right)
    =\sigma^B_{a}\left(b\circ\sigma^B_b\sigma^B_{b^-}\left(c\right)\right)
    = \sigma^B_{a}\left(b\circ c\right),
\end{align*}
hence, by \cref{prop-bi-skew}, $B$ is a bi-skew brace. Therefore the claim is proved.
\end{proof}
\end{cor}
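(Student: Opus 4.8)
The plan is to reduce everything to \cref{prop-bi-skew}, which says that $B = B_{\sigma^B}$ is a bi-skew brace exactly when each $\sigma^B_a$ lies in $\Aut\left(B,\circ\right)$, and then to convert this automorphism condition into the stated identity by playing the anti-homomorphism law for $\sigma^B$ against the defining identity \eqref{eq:associative}. Throughout I would write $\sigma := \sigma^B$ and use three facts established earlier: $a + b = a\circ\sigma_a\left(b\right)$ (by \cref{th:sk-aff}); $\sigma$ is a group anti-homomorphism $\left(B,\circ\right)\to\Sym_B$, so $\sigma_{x\circ y} = \sigma_y\sigma_x$ and $\sigma_0 = \id_B$; and, since $\lambda_a\in\Aut\left(B,+\right)$ in a skew brace by \cref{prop:lambda-rho}, each $\sigma_a = \lambda_{a^-}$ already belongs to $\Aut\left(B,+\right)$.

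For the forward implication, assume $B$ is a bi-skew brace, so $\sigma_a\in\Aut\left(B,\circ\right)$ for every $a$ by \cref{prop-bi-skew}. Expanding $\sigma_a\left(b\circ\sigma_b\left(c\right)\right)$ in two ways — once using that $\sigma_a$ is a $\circ$-homomorphism, once using \eqref{eq:associative} — gives $\sigma_a\left(b\right)\circ\sigma_a\sigma_b\left(c\right) = \sigma_a\left(b\right)\circ\sigma_{\sigma_a\left(b\right)}\sigma_a\left(c\right)$; left cancellation in the group $\left(B,\circ\right)$ yields $\sigma_a\sigma_b = \sigma_{\sigma_a\left(b\right)}\sigma_a$, which by the anti-homomorphism law is precisely $\sigma_{b\circ a} = \sigma_{a\circ\sigma_a\left(b\right)}$. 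Since $a\circ\sigma_a\left(b\right) = a+b$ and $\sigma_{b\circ a} = \sigma_a\sigma_b$, this also reads $\sigma_{a+b} = \sigma_a\sigma_b$ for all $a,b\in B$, i.e.\ $\sigma$ is a homomorphism from $\left(B,+\right)$ into $\Aut\left(B,+\right)$.

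For the converse, assume $\sigma_{a\circ\sigma_a\left(b\right)} = \sigma_{b\circ a}$ for all $a,b\in B$. I would show each $\sigma_a$ is a $\circ$-endomorphism by the short computation sketched in the excerpt: starting from $\sigma_a\left(b\right)\circ\sigma_a\left(c\right)$, rewrite $\sigma_a\left(c\right) = \sigma_{b\circ a}\sigma_{b^-}\left(c\right)$ (valid because $\sigma_{b\circ a}\sigma_{b^-} = \sigma_a\sigma_b\sigma_{b^-} = \sigma_a\sigma_{b^-\circ b} = \sigma_a$), replace $\sigma_{b\circ a}$ by $\sigma_{a\circ\sigma_a\left(b\right)} = \sigma_{\sigma_a\left(b\right)}\sigma_a$ using the hypothesis and the anti-homomorphism law, and then recognize the result via \eqref{eq:associative} as $\sigma_a\left(b\circ\sigma_b\sigma_{b^-}\left(c\right)\right) = \sigma_a\left(b\circ c\right)$. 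Hence $\sigma_a\in\End\left(B,\circ\right)$; since $\sigma$ is a groupal, hence cancellative, affine structure (\cref{th:sk-aff}), $\sigma_a\in\Sym_B$, so in fact $\sigma_a\in\Aut\left(B,\circ\right)$, and \cref{prop-bi-skew} gives that $B$ is a bi-skew brace. The "equivalently" clause is then immediate from the rewriting $\sigma_{a\circ\sigma_a\left(b\right)} = \sigma_{b\circ a} \Longleftrightarrow \sigma_{a+b} = \sigma_a\sigma_b$ noted above, together with $\sigma_a\in\Aut\left(B,+\right)$. The only delicate points are tracking the reversal in the anti-homomorphism law and recalling that bijectivity of $\sigma_a$ — needed to upgrade "endomorphism" to "automorphism" — is automatic in the groupal/cancellative setting; no step is computationally heavy.
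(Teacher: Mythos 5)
Your proposal is correct and follows essentially the same route as the paper: both directions reduce to \cref{prop-bi-skew}, the forward direction by expanding $\sigma_a\left(b\circ\sigma_b\left(c\right)\right)$ via \eqref{eq:associative} and via the $\circ$-homomorphism property and cancelling, and the converse by the same rewriting $\sigma_a\left(c\right) = \sigma_{b\circ a}\sigma_{b^-}\left(c\right)$ followed by the hypothesis and \eqref{eq:associative}. Your additional remarks on the anti-homomorphism reversal, the identification $\sigma_{a+b} = \sigma_a\sigma_b$, and the automatic bijectivity of $\sigma_a$ only make explicit what the paper leaves implicit.
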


\noindent By the previous result, it follows also that any bi-skew brace satisfies $\sigma^B_{a^-} = \sigma^B_{-a}$, for every $a\in B$, or, equivalently, $\lambda_{a^-} = \lambda_{-a}$, for every $a\in B$.
\bigskip

In the next, we analyze skew braces obtained through the instances of groupal affine structures provided in \cref{Section-aff}.
\begin{ex}
Given a group $G = \left(B, \circ\right)$, by \cref{prop-bi-skew} we obtain  that
the groupal affine structures $\sigma:B\to\Sym_B$  in \cref{exs:1} defined by $\sigma_a\left(b\right) = f\left(a\right)^-\circ b\circ f\left(a\right)$, for all $a,b\in B$, with $f$ an idempotent endomorphism of $G$,
give rise to bi-skew braces $B_\sigma$ having $G$ as multiplicative group.
Clearly, if $f\left(a\right) =0$, for every $a\in B$, then $B_{\sigma}$ is the trivial skew brace on $G$; if $f=\id_B$, then $B_{\sigma}$ is the almost trivial skew brace on $G$.\\
Among these examples of  bi-skew braces, one can find some 
that are not $\lambda$-homomorphic, i.e., such that $\lambda_{a+b}\neq\lambda_a\lambda_b$. In this way, by the observations in \cite[Section 5]{BaNeYa22x}, one can deduce that these are skew braces different from those provided in \cite{Ch19}, \cite{Ko21, Ko22}, and \cite{CarSt21, CarSt21x}. 
To show this fact, at first note that
$B_\sigma$ is $\lambda$-homomorphic if and only if $$f\left(a\circ b\right)\circ c\circ f\left(a\circ b\right)^- = f\left(b\circ a\right)\circ c\circ f\left(b\circ a\right)^-,$$for all $a,b\in B$.
Then, considered $C_2 = \langle x\rangle$ the cyclic group of order $2$ and $\Sym_3 = \langle \left(1\,2\,3\right), \left(1\,2\right)\rangle$ the symmetric group of degree $3$, let $G$ be the direct product $G = C_2\times \Sym_3$ (in particular $G$ is isomorphic to the dihedral group $D_{12}$) and $f:G\to G$ the idempotent endomorphism of $G$ given by 
$f\left(x,\left(1\,2\right)\right) = \left(x,\left(1\,2\right)\right)$,   $f\left(x,\id_3\right) = \left(1,\id_3\right)$,
and $f\left(1,\left(1\,2\,3\right)\right) = \left(1, \left(1\,2\, 3\right)\right)$.
By choosing $a = c = \left(x,\left(1\,2\right)\right)$ and $b = \left(1, \left(1\,2\, 3\right)\right)$, we obtain that  $f\left(a\circ b\right)\circ c\circ f\left(a\circ b\right)^-\neq f\left(b\circ a\right)\circ c\circ f\left(b\circ a\right)^-$. 
Consequently, the bi-skew brace $B_\sigma$ is not $\lambda$-homomorphic.
\end{ex}
\medskip

\begin{exs}\label{ex:esZ-Z2l}\hspace{1mm}
\begin{enumerate}
    \item Let $G =\left(B,\circ\right)$ be the infinite cyclic group
    and  $\sigma:B\to\Sym_{B}$ the affine structure in \cref{ex:Z}-$1.$ defined by $\sigma_{g^k}\left(g^l\right):= g^{\left(-1\right)^{k}l}$, for all $k,l\in \mathbb{Z}$. Then, it is a routine computation to verify that $\sigma_{g^k}$ is an automorphism of $G$, for every $k\in\mathbb{Z}$, thus, by \cref{prop-bi-skew}, $B_{\sigma}$ is a bi-skew brace.
    More specifically, the addition of $B_{\sigma}$ is given by
    \begin{align}\label{eq:sum-infinite-dih}
        g^k \, +\, g^l = g^{k + \left(-1\right)^kl},
    \end{align}
    for all $k,l\in\mathbb{Z}$, hence $\left(B, +\right)$  is isomorphic to the infinite dihedral group $\mathbb{D}_{\infty}$. 
    Note also that the skew brace $\left(B,\circ,+\right)$ coincides with the structure of brace contained in \cite[Proposition 6]{Ru07-cyc}.
    \item By similar observations in  $1.$, we        obtain that the first class of examples of affine structure on the cyclic group 
    $C_m$ with $m=2l$
    in \cref{ex:Z/Z2l} are bi-skew braces with additive group $\left(B, +\right)$ isomorphic to the dihedral group $\mathbb{D}_{l}$. In particular, in the case of $m=6$, the skew brace $\left(B,\circ, +\right)$ coincides with the brace of order $6$ obtained by Rump in \cite[Example 3]{Ru07}.
\item If $G =\left(B, \circ\right)$ is the infinite cyclic group 
and  $\sigma:B\to\Sym_{B}$ is the affine structure in \cref{ex:Z}-$2.$ defined by 
$\sigma_{g^k}\left(g^l\right):= g^{k\left(-1 + \left(-1\right)^l\right) + l}$, for all $k,l\in \mathbb{Z}$, then one can check that the maps $\sigma_{g^k}$ are not automorphisms of $G$, hence $B_\sigma$ is not a bi-skew brace. Furthermore, the sum of such a skew brace is given by
\begin{align*}
    g^k\, +\, g^l = g^{l + \left(-1\right)^{l}k},
\end{align*}
for all $k,l\in\mathbb{Z}$.
Besides, note that this structure of skew brace on $\left(\mathbb{Z}, +\right)$ is nothing but the opposite skew brace obtained in $1)$, i.e., the skew brace obtained by considering the opposite sum of such a skew brace (see \cite[Proposition 3.1]{KoTr20}).
\item 
The affine structures on the cyclic group
$C_m$ with $m=2l$
in the second class of examples in \cref{ex:Z/Z2l} give rise to bi-skew braces if $m\mid 4$. In fact, it is a routine computation to check that $\sigma_{g^k}$ is an automorphism of $G$ if and only if $m\mid 4$.
\end{enumerate}
\end{exs}
\medskip

     
    
    

\begin{ex}
 The skew brace on the cyclic group $C_8$ 
 obtained through the affine structure in \cref{ex:C8+C8=D8} is a non-trivial brace with additive group isomorphic to $C_8$. 
 In particular, this brace is obtained through  the affine structure $\omega$ which gives rise to a skew brace that has the multiplicative structure isomorphic to $C_8$ and additive structure isomorphic to the dihedral group $\mathbb{D}_8$.
\end{ex}

\medskip


\begin{rem}
In \cite[Problem 2.27]{Ve19}, Vendramin posed the question of classifying isomorphism classes of skew braces with multiplicative group isomorphic to $\left(\mathbb{Z},+\right)$. 
In this regard,  the skew brace in \cref{ex:esZ-Z2l}-$3)$ identifies a class of skew brace that is different to that in \cref{ex:esZ-Z2l}-$2)$ which is already known. Note also that \cref{ex:esZ-Z2l}-$3)$ can be found in \cite{StTr22x} where the problem mentioned above is solved.
\end{rem}

\medskip

\section{Affine structures on Zappa products of groups}

This section aims to provide a construction of affine structures on groups that are the Zappa product of two given ones both endowed with affine structures. Furthermore, we show that such a construction allows to obtain semi-braces that are different from those determined by the matched product of semi-braces contained in \cite[Theorem 6]{CCSt20-2} (see also \cite[p. 249]{JeAr19}).

%

\medskip

To this purposes, let us first present a construction of groups that is equivalent to that of Zappa product of two groups (this one recalled at the end of \cref{section-semi-aff}) that is already intrinsic in \cite{Ba18}, \cite{SmVe18}, \cite{CCSt20}, \cite{CCSt20-2}, and \cite{CaMaSt21}. 
This alternative way to consider Zappa product of two groups allows one to compare the construction provided in this work with the matched product of semi-braces, skew braces, and braces contained in the papers mentioned above.

Let $H:=\left(S,\circ\right), K:= \left(T,\circ \right)$ be groups, $\alpha:T \to \Sym_S$ a homomorphism from $H$ to the symmetric group of $S$, and $\beta:S\to \Sym_T$ a homomorphism from $K$ to the symmetric group of $T$ such that
    \begin{align}\label{eq:mps}
     \alphaa{}{u}{\left(\alphaa{-1}{u}{\left(a\right)}\circ b\right)} 
     = a\circ\alphaa{}{\beta^{-1}_{a}{\left(u\right)}}{\left(b\right)}
     \qquad\quad
     \beta_{a}\left(\beta^{-1}_{a}\left(u\right)\circ v\right) = u\circ \beta_{\alphaa{-1}{u}{\left(a\right)}}\left(v\right)
    \end{align}
    are satisfied, for all $a,b \in S$ and $u,v \in T$. Let us call the quadruple $(S,T,\alpha,\beta)$ a \emph{matched product system of groups}. 
    Then, the binary operation given by
    \begin{align}\label{eq:prod-alfa_beta}
        \left(a,u\right)\circ\left(b,v\right)
        := \left(
        a\circ\alphaa{}{\beta^{-1}_{a}{\left(u\right)}}{\left(b\right)},\ u\circ \beta_{\alphaa{-1}{u}{\left(a\right)}}\left(v\right)
        \right),
    \end{align}
    for all $\left(a,u\right),\left(b,v\right)\in S\times T$, makes $S\times T$ into a group with identity $\left(0,0\right)$ and such that any $\left(a,u\right)\in S\times T$ has inverse given by
    $\left(\alpha^{-1}_{\beta^{-1}_a\left(u\right)}\left(a^-\right),\, \beta^{-1}_{\alpha^{-1}_u\left(a\right)}\left(u^-\right)\right) 
    = \left(\alpha^{-1}_u\left(a\right)^-,\; \beta^{-1}_a\left(u\right)^-\right)$.
    Finally, set $\sigma_u = \alpha_u$ and $\delta_a\left(u\right) = \beta^{-1}_{\alpha_{u}\left(a\right)}\left(u\right)$, it follows that $\left(S\times T, \cdot\right)$ is the Zappa product of $H$ and $K$ via $\sigma$ and $\delta$ and the map $f: S\times T \to S\times T$ defined by $\varphi\left(a,u\right) = \left(a, \beta^{-1}_{a}\left(u\right)\right)$, for all $a \in S$ and $u \in T$, is an isomorphism from $G$ to the Zappa product of $S$ and $T$. 
    Furthermore, let us denote the group $G$ as $H\bowtie_{\alpha,\beta} K$.
\medskip

Now, we recall the construction of the matched product of semi-braces. Let $S, T$ be semi-braces, $\alpha:T \to \Aut\left(S\right)$ a homomorphism from $\left(T,\circ\right)$ to the automorphism group of $\left(S,+\right)$, and $\beta:S\to \Aut\left(T\right)$ a homomorphism from $\left(S,\circ\right)$ to the automorphism group of $\left(T,+\right)$ such that identities in \eqref{eq:mps} are satisfied. Then, $(S,T,\alpha,\beta)$ is called a \emph{matched product system of semi-braces}.
In particular, the following conditions are satisfied
\begin{align}
\label{eq:mps-old}
\lambda_{a}\alpha_{\beta^{-1}_a\left(u\right)}
= \alpha_u\lambda_{\alpha^{-1}_u\left(a\right)}
\qquad
\lambda_{u}\beta_{\alpha^{-1}_u\left(a\right)}
= \beta_a\lambda_{\beta^{-1}_a\left(u\right)},
\end{align}
for every $\left(a,u\right)\in S\times T$.

\begin{theor}[Theorem 6, \cite{CCSt20-2}]\label{th:matched-old}
		Let $\left(S,T,\alpha,\beta\right)$ be a matched product system of semi-braces. Then, $S\times T$ with respect to the operations
		\begin{align*}
		\left(a,u\right) \oplus \left(b, v\right) 
		= \left(a + b, \, u + v\right)
		\qquad
		\left(a,u\right)\circ\left(b,v\right) 
		= \left(a\circ \alpha_{\beta^{-1}_a\left(u\right)}\left(b\right), 
		u\circ \beta_{\alpha^{-1}_u\left(a\right)}\left(v\right)
		\right),
		\end{align*}
	for all $\left(a,u\right), \left(b,v\right) \in S \times T$, is a left semi-brace called the \emph{matched product} of $S$ and $T$ via $\alpha$ and $\beta$. 
\end{theor}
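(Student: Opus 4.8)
The plan is to verify the two semi-brace axioms for $S\times T$ equipped with the operations $\oplus$ and $\circ$ defined in the statement. For the $\circ$-axiom, I would invoke the fact (recalled just before the theorem) that $(S,T,\alpha,\beta)$ being a matched product system of groups means precisely that \eqref{eq:mps} holds, and that this is exactly the condition guaranteeing that formula \eqref{eq:prod-alfa_beta} makes $S\times T = H\bowtie_{\alpha,\beta}K$ into a group with identity $(0,0)$. Since the group operation in the present theorem coincides verbatim with \eqref{eq:prod-alfa_beta} (the multiplicative structures of $S$ and $T$ are the relevant groups $H$ and $K$), the pair $(S\times T,\circ)$ is a group for free, with the explicit inverse formula already provided. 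So the only genuine work is checking that $(S\times T,\oplus)$ is a semigroup — which is immediate since $\oplus$ is componentwise and each of $(S,+)$, $(T,+)$ is a semigroup — and then verifying the compatibility identity \eqref{eq:semi} for the pair $(\oplus,\circ)$.

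For the compatibility identity, I would compute both sides of
$
(a,u)\circ\bigl((b,v)\oplus(c,w)\bigr) = (a,u)\circ(b,v)\oplus (a,u)\circ\bigl((a,u)^-\oplus(c,w)\bigr)
$
componentwise. Using the inverse formula, $(a,u)^- = \bigl(\alpha^{-1}_u(a)^-,\ \beta^{-1}_a(u)^-\bigr)$, so the first component of $(a,u)^-\oplus(c,w)$ is $\alpha^{-1}_u(a)^- + c$ and the second is $\beta^{-1}_a(u)^- + w$. On the left, the first component is $a\circ\alpha_{\beta^{-1}_a(u)}(b+c) = a\circ\alpha_{\beta^{-1}_a(u)}(b) + a\circ\alpha_{\beta^{-1}_a(u)}(c)$, since $\alpha_{\beta^{-1}_a(u)}\in\Aut(S,+)$ and left multiplication by $a$ in the semi-brace $S$ satisfies \eqref{eq:semi}; one then has to match the second summand against the first component of $(a,u)\circ\bigl((a,u)^-\oplus(c,w)\bigr)$. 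The key technical input here is the pair of relations \eqref{eq:mps-old}, $\lambda_a\alpha_{\beta^{-1}_a(u)} = \alpha_u\lambda_{\alpha^{-1}_u(a)}$ and its mirror for $\beta$, which are the semi-brace counterparts of \eqref{eq:mps}; combined with $\lambda_a(x) = a\circ(a^- + x)$ and the fact that the $\alpha_u$ are additive automorphisms, these let one rewrite $a\circ\bigl(\alpha^{-1}_u(a)^- + c'\bigr)$ (for the appropriate $c'$) in the form $a\circ\alpha_{\beta^{-1}_a(u)}(a^- + \cdot)$ and collapse the expression. The second component is handled symmetrically, with the roles of $S$ and $T$, $\alpha$ and $\beta$ interchanged.

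I expect the main obstacle to be the bookkeeping in the first-component computation: one must carefully track how the various $\alpha$- and $\beta$-twists propagate when one expands $(a,u)\circ(b,v)\oplus(a,u)\circ\bigl((a,u)^-\oplus(c,w)\bigr)$ using \eqref{eq:prod-alfa_beta} and then apply \eqref{eq:mps-old} and \eqref{eq:semi} in the right order. A cleaner route that avoids most of this is to use the functorial machinery already in place: by \cref{th:sk-aff}, each of $S$ and $T$ carries its associated affine structure $\sigma^S,\sigma^T$, and the matched product group $H\bowtie_{\alpha,\beta}K$ together with the product map $\sigma^S\times\sigma^T$ (suitably twisted by $\alpha,\beta$) should be shown to be an affine structure on $H\bowtie_{\alpha,\beta}K$ — the conditions \eqref{eq:mps}, \eqref{eq:mps-old} are exactly what is needed to verify \eqref{eq:associative} for the product map. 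Then \cref{th:aff-skew} produces a semi-brace whose sum is $(a,u)+(b,v) = (a,u)\circ(\sigma^S\times\sigma^T)_{(a,u)}(b,v)$, and a short computation using $\sigma^S_a = \lambda_{a^-}$ identifies this sum with the componentwise $\oplus$. This reduces the theorem to \eqref{eq:associative}, which is the natural home for the matched-product conditions, and I would present the proof this way if the earlier sections have set up enough of the $\sigma^S\times\sigma^T$ notation; otherwise I would fall back on the direct componentwise verification sketched above. Either way, the left-cancellativity refinement (if $S,T$ are cancellative semi-braces then so is the matched product) follows since $(a,u)\oplus(b,v) = (a,u)\oplus(c,w)$ forces $a+b = a+c$ and $u+v = u+w$ componentwise, hence $b=c$, $v=w$, so $(b,v)=(c,w)$.
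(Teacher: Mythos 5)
First, a point of reference: the paper itself contains no proof of this statement — it is quoted verbatim as Theorem 6 of \cite{CCSt20-2} and used only as background for the comparison with \cref{th:Zappa_prod} — so there is no internal proof to measure you against. Judged on its own terms, your direct componentwise strategy is the right one and is essentially the standard argument: $(S\times T,\circ)$ is a group for free because a matched product system of semi-braces is in particular a matched product system of groups (for the multiplicative structures) and the operation coincides with \eqref{eq:prod-alfa_beta}; $(S\times T,\oplus)$ is trivially a semigroup; only \eqref{eq:semi} needs real checking. One local slip worth flagging: the identity $a\circ\alpha_{\beta^{-1}_a(u)}(b+c)=a\circ\alpha_{\beta^{-1}_a(u)}(b)+a\circ\alpha_{\beta^{-1}_a(u)}(c)$ is false as written — applying \eqref{eq:semi} in $S$ after the automorphism gives $a\circ\alpha_{\beta^{-1}_a(u)}(b)+a\circ\bigl(a^-+\alpha_{\beta^{-1}_a(u)}(c)\bigr)$, not a distributive law. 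The clean way to close the first-component computation is then the observation that $\alpha_{\beta^{-1}_a(u)}\bigl(\alpha^{-1}_u(a)^-\bigr)=a^-$, immediate from the inverse formula $\alpha^{-1}_u(a)^-=\alpha^{-1}_{\beta^{-1}_a(u)}(a^-)$; combined with additivity of $\alpha_{\beta^{-1}_a(u)}$ this identifies the first component of $(a,u)\circ\bigl((a,u)^-\oplus(c,w)\bigr)$ with $a\circ\bigl(a^-+\alpha_{\beta^{-1}_a(u)}(c)\bigr)$, and you need \eqref{eq:mps-old} far less than you anticipate. Your alternative route through the affine-structure machinery is workable in principle but the relevant map is not $\sigma^S\times\sigma^T$: as the paper notes after \cref{th:Zappa_prod}, the affine structure associated to the matched product is the twisted map $\bar\sigma_{(a,u)}(b,v)=\bigl(\alpha^{-1}_{\beta^{-1}_a(u)}\sigma_a(b),\,\beta^{-1}_{\alpha^{-1}_u(a)}\sigma_u(v)\bigr)$, whereas $\sigma^S\times\sigma^T$ generally produces a different, non-componentwise sum; so that route amounts to verifying \eqref{eq:associative} for $\bar\sigma$, which is comparable in effort to the direct check.
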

\noindent Clearly, the additive structure is the direct sum of $\left(S,+\right)$ and $\left(T,+\right)$ and the multiplicative one is the Zappa product  $\left(S,\circ\right)\bowtie_{\alpha,\beta}\left(T,\circ\right)$.
\medskip
    
From now on, to avoid overloading the notation, if $\sigma^S$ and $\sigma^T$ are  affine structures on two given groups $H =\left(S,\circ\right)$ and $K = \left(T,\circ \right)$, respectively, we will write $\sigma^S_a$ and $\sigma^T_u$ simply as $\sigma_a$ and $\sigma_u$, respectively, for all $a\in S$ and $u\in T$. 
Moreover, we set $\bar{a}:= \alpha^{-1}_{u}\left(a\right)$ and $\bar{u}:= \beta^{-1}_{a}\left(u\right)$, for every pair $\left(a, u\right)\in S\times T$.
In addition, it is useful to recall that, if $B$ is a semi-brace, then the map $\rho_a$  can be written in terms of affine structure as $\rho_b\left(a\right) = \sigma_{a^-}\left(b\right)^-\circ a\circ b$, for all $a,b\in B$.
\medskip

The following theorem allows for constructing affine structures on groups $H\bowtie_{\alpha,\beta}K$. 
Specifically, it provides sufficient conditions so that the direct product $\sigma^S\times\sigma^T$ of two affine structures $\sigma^S$ and $\sigma^T$ is an affine structure on a group of the type $H\bowtie_{\alpha,\beta}K$. 
\begin{theor}\label{th:Zappa_prod}
    Let $H =\left(S,\circ\right), K = \left(T,\circ \right)$ be groups endowed with affine structures $\sigma^S$ and $\sigma^T$, respectively, and $(S,T,\alpha,\beta)$ a matched product system of groups. If the following conditions
        \begin{align}
        \label{eq:I}
        \sigma_0\alpha_{u} = \alpha_{u}\sigma_0
        &\qquad
        \sigma_0\beta_{a} = \beta_{a}\sigma_0
        \tag{I}\\
        \label{eq:II}
            \sigma_{\alpha_u\left(a\right)} = \sigma_a
            &\qquad
            \sigma_{\beta_a\left(u\right)} = \sigma_u
            \tag{II}\\
            \label{eq:III}
            \alpha_{\bar{v}}\sigma_{\rho_b\left(a^-\right)}
            = \sigma_{\rho_b\left(a^-\right)}
            \alpha_{\overline{V}}
            &\qquad
            \beta_{\bar{b}}\sigma_{\rho_v\left(u^-\right)}
            = \sigma_{\rho_v\left(u^-\right)}
            \beta_{\overline{B}}
            \tag{III}
        \end{align}
        are satisfied, for all $a,b\in S$ and $u,v\in T$, where $\overline{V} = \beta^{-1}_{\sigma_a\left(b\right)}\sigma_u\left(v\right)$ and $\overline{B} = \alpha^{-1}_{\sigma_u\left(v\right)}\sigma_a\left(b\right)$,
        then the map $\sigma:= \sigma^S\times\sigma^T$
        is an affine structure on the group $H\bowtie_{\alpha,\beta}K$.
        \begin{proof}
        Let $\left(a,u\right), \left(b,v\right),\left(c,w\right)\in S\times T$. 
        Then, by \eqref{eq:II},
        \begin{align*}
        \sigma_{\left(a,u\right)\circ \left(b,v\right)}\left(c,w\right)
            &= \left(\sigma_{\alpha_{\bar{u}\left(b\right)}}\sigma_a\left(c\right),\,
            \sigma_{\beta_{\bar{a}\left(v\right)}}\sigma_u\left(w\right)\right)
            =\left(\sigma_{b}\sigma_a\left(c\right),\,
            \sigma_{v}\sigma_u\left(w\right)\right)\\
            &= \sigma_{\left(b,v\right)}\sigma_{\left(a,u\right)}\left(c,w\right),
        \end{align*}
        i.e., $\sigma$ is an anti-homomorphism.
        Moreover, set $X:= \sigma_{\left(a,u\right)}\left(\left(b,v\right)\circ \sigma_{\left(b,v\right)}\left(c,w\right)\right)$ we have that
        \begin{align*}
            X
            = 
            \left(\sigma_a\left(b\circ\alpha_{\bar{v}}\sigma_b\left(c\right)\right)
            , 
            \, \sigma_{u}\left(v\circ\beta_{\bar{b}}\sigma_v\left(w\right)\right)\right)
        \end{align*}
        where, the first component of $X$ can be written as 
        \begin{align*}
         \sigma_a\left(b\circ\alpha_{\bar{v}}\sigma_0\sigma_b\left(c\right)\right)
         &= \sigma_a\left(b\circ\sigma_{0}\alpha_{\bar{v}}\sigma_b\left(c\right)\right) &\mbox{ by \eqref{eq:I}}\\
         &= \sigma_a\left(b\circ\sigma_{b}\left(\sigma_{b^-}\alpha_{\bar{v}}\sigma_b\left(c\right)\right)\right)\\
         &= \sigma_a\left(b\right)\circ
        \sigma_{\sigma_a\left(b\right)}\sigma_a\left(\sigma_{b^-}\alpha_{\bar{v}}\sigma_b\left(c\right)\right)\\
       & = \sigma_a\left(b\right)\circ
        \sigma_{b^-\circ a\circ \sigma_a\left(b\right)}\alpha_{\bar{v}}\sigma_{b}\left(c\right)\\
        &= \sigma_a\left(b\right)\circ
        \sigma_{\rho_{b}\left(a^-\right)^-}\alpha_{\bar{v}}\sigma_{b}\left(c\right).
        \end{align*}
        In addition, set $Y:= \sigma_{\left(a,u\right)}\left(b,v\right)
          \circ\sigma_{\sigma_{\left(a,u\right)}\left(b,v\right)}\sigma_{\left(a,u\right)}\left(c,w\right)$, we obtain that
        \begin{align*}
          Y&= \left(\sigma_a\left(b\right), \sigma_u\left(v\right)\right)
          \circ\left(\sigma_{\sigma_a\left(b\right)}\sigma_a\left(c\right), \sigma_{\sigma_u\left(v\right)}\sigma_u\left(w\right)\right)\\
          &=\left(\sigma_a\left(b\right)\circ
          \alpha_{\overline{V}}\sigma_{\sigma_a\left(b\right)}\sigma_a\left(c\right),
          \, \sigma_u\left(v\right)\circ
          \beta_{\overline{B}}\sigma_{\sigma_u\left(v\right)}\sigma_u\left(w\right)
          \right)\\
          &=\left(\sigma_a\left(b\right)\circ
          \alpha_{\overline{V}}\sigma_{a\circ\sigma_a\left(b\right)}\left(c\right),
          \, \sigma_u\left(v\right)\circ
          \beta_{\overline{B}}\sigma_{u\circ \sigma_u\left(v\right)}\left(w\right)
          \right).
        \end{align*}
        Observing that
        \begin{align*}
            \sigma_{\rho_b\left(a^-\right)^-}
            \alpha_{\bar{v}}\sigma_{b}
            &=\sigma_{\rho_b\left(a^-\right)^-}
            \alpha_{\bar{v}}\sigma_0\sigma_{b}\\ &=\sigma_0\alpha_{\overline{V}}\sigma_{\rho_b\left(a^-\right)^-}\sigma_{b}&\mbox{by \eqref{eq:III}}\\
            &= \alpha_{\overline{V}}\sigma_{b\circ\rho_b\left(a^-\right)^- \circ \,0}&\mbox{by \eqref{eq:I}}\\
            &=\alpha_{\overline{V}}\sigma_{a\circ\sigma_a\left(b\right)},
        \end{align*}
       it follows that the first components of $X$ and $Y$, respectively, coincide. Similarly, the second components are equal. Therefore,  $\sigma$ is an affine structure on $H\bowtie_{\alpha,\beta}K$.
        \end{proof}
    \end{theor}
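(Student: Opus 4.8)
The plan is to verify directly the two defining requirements of an affine structure for $\sigma := \sigma^S\times\sigma^T$ on $G := H\bowtie_{\alpha,\beta}K$, working throughout with the explicit multiplication \eqref{eq:prod-alfa_beta} and the fact that $\sigma_{(a,u)}(b,v) = (\sigma_a(b),\sigma_u(v))$. First I would check that $\sigma$ is a semigroup anti-homomorphism from $G$ into $(S\times T)^{S\times T}$: expanding $\sigma_{(a,u)\circ(b,v)}$ via \eqref{eq:prod-alfa_beta} and using that $\sigma^S,\sigma^T$ are anti-homomorphisms, the index $a\circ\alpha_{\bar u}(b)$ in the first coordinate yields the composite $\sigma_{\alpha_{\bar u}(b)}\sigma_a$, and condition \eqref{eq:II} collapses $\sigma_{\alpha_{\bar u}(b)}$ to $\sigma_b$; symmetrically in the second coordinate, so that $\sigma_{(a,u)\circ(b,v)} = \sigma_{(b,v)}\circ\sigma_{(a,u)}$.

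Since $\sigma$ is a direct product of maps, the identity \eqref{eq:associative} for $\sigma$ decouples into its two coordinates, and by the evident $S\leftrightarrow T$, $\alpha\leftrightarrow\beta$ symmetry of the hypotheses it suffices to treat the first one. So I would fix $(a,u),(b,v),(c,w)$, set $X := \sigma_{(a,u)}\big((b,v)\circ\sigma_{(b,v)}(c,w)\big)$ and $Y := \sigma_{(a,u)}(b,v)\circ\sigma_{\sigma_{(a,u)}(b,v)}\sigma_{(a,u)}(c,w)$, and compare their first coordinates. Unwinding \eqref{eq:prod-alfa_beta}, the first coordinate of $X$ is $\sigma_a\big(b\circ\alpha_{\bar v}\sigma_b(c)\big)$. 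The crucial manoeuvre is to bring the argument of $\sigma_a$ into the shape $b\circ\sigma_b(\,\cdot\,)$ so that \eqref{eq:associative} for $\sigma^S$ applies: since $\sigma_0\sigma_b=\sigma_b$ one may insert $\sigma_0$, push it past $\alpha_{\bar v}$ by \eqref{eq:I}, and factor $\sigma_0=\sigma_b\sigma_{b^-}$; applying \eqref{eq:associative} for $\sigma^S$ and then the anti-homomorphism property several times rewrites the first coordinate of $X$ as $\sigma_a(b)\circ\sigma_{b^-\circ a\circ\sigma_a(b)}\,\alpha_{\bar v}\sigma_b(c)$, at which point I would recognise $b^-\circ a\circ\sigma_a(b) = \rho_b(a^-)^-$ via the formula $\rho_b(x)=\sigma_{x^-}(b)^-\circ x\circ b$ recalled before the theorem.

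For $Y$, a direct expansion of the matched-product multiplication of $(\sigma_a(b),\sigma_u(v))$ with $(\sigma_{\sigma_a(b)}\sigma_a(c),\sigma_{\sigma_u(v)}\sigma_u(w))$, together with $\sigma_{\sigma_a(b)}\sigma_a=\sigma_{a\circ\sigma_a(b)}$, gives first coordinate $\sigma_a(b)\circ\alpha_{\overline{V}}\,\sigma_{a\circ\sigma_a(b)}(c)$ with $\overline{V}=\beta^{-1}_{\sigma_a(b)}\sigma_u(v)$ as in the statement. Cancelling the common left factor $\sigma_a(b)$ in the group $H$, it then remains to establish the operator identity $\sigma_{\rho_b(a^-)^-}\,\alpha_{\bar v}\sigma_b=\alpha_{\overline{V}}\,\sigma_{a\circ\sigma_a(b)}$ on $S$; this is exactly what condition \eqref{eq:III} is for. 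Inserting $\sigma_0$ once more, commuting $\sigma_{\rho_b(a^-)^-}$ past $\alpha_{\bar v}$ by \eqref{eq:III} (which simultaneously turns $\alpha_{\bar v}$ into $\alpha_{\overline{V}}$), absorbing the stray $\sigma_0$ by \eqref{eq:I}, and using the anti-homomorphism property together with $b\circ\rho_b(a^-)^-=a\circ\sigma_a(b)$, closes the first coordinate. The second coordinate follows verbatim after exchanging $S\leftrightarrow T$, $\alpha\leftrightarrow\beta$, $\overline{V}\leftrightarrow\overline{B}$.

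I expect the main obstacle to be precisely the bookkeeping around $\sigma_0$: since $\sigma^S$ and $\sigma^T$ are only \emph{semigroup} anti-homomorphisms, $\sigma_0$ need not be the identity, so the passage ``$\alpha_{\bar v}\sigma_b=\alpha_{\bar v}\sigma_0\sigma_b$ and $\sigma_0=\sigma_b\sigma_{b^-}$'' cannot be replaced by inverting $\sigma_b$, and conditions \eqref{eq:I} and \eqref{eq:III} must be applied at exactly the right moments to shuttle $\sigma_0$ through the $\alpha$- and $\beta$-twists. Keeping the twisted indices $\rho_b(a^-)$, $\overline{V}$, $\overline{B}$ aligned through these steps—again via the $\rho$-in-terms-of-$\sigma$ formula and the anti-homomorphism identity $\sigma_{x\circ y}=\sigma_y\sigma_x$—is the delicate part, whereas everything else is a routine, if lengthy, unwinding of \eqref{eq:prod-alfa_beta}.
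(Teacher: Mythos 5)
Your proposal is correct and follows essentially the same route as the paper's own proof: the anti-homomorphism check via \eqref{eq:II}, the coordinate-wise comparison of $X$ and $Y$, the insertion of $\sigma_0$ shuttled through $\alpha_{\bar v}$ by \eqref{eq:I} and factored as $\sigma_b\sigma_{b^-}$ so that \eqref{eq:associative} applies, the identification $b^-\circ a\circ\sigma_a(b)=\rho_b(a^-)^-$, and the closing operator identity $\sigma_{\rho_b(a^-)^-}\alpha_{\bar v}\sigma_b=\alpha_{\overline{V}}\sigma_{a\circ\sigma_a(b)}$ obtained from \eqref{eq:III} and \eqref{eq:I}. Your cautionary remark about $\sigma_0$ not being the identity is exactly the point the paper's computation is engineered around.
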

\medskip

Note that, if in \cref{th:Zappa_prod} the affine structures on the groups $H$ and $K$, respectively, are cancellative, then the identities in \eqref{eq:I} are trivially satisfied and the affine structure on $H\bowtie K_{\alpha,\beta}$ is cancellative. Moreover, if the affine structures on $H$ and $K$ are groupal, then also the affine structure $H\bowtie K_{\alpha,\beta}$ is. More specifically, we have the following characterization.
\begin{cor}
    Let $H =\left(S,\circ\right), K = \left(T,\circ \right)$ be groups endowed with cancellative affine structures and $(S,T,\alpha,\beta)$ a matched product system. Then, the map $\sigma:S\times T\to S\times T$ defined by $\sigma_{\left(a,u\right)}\left(b,v\right) = \left(\sigma_a\left(b\right),\, \sigma_u\left(v\right)\right)$, for all $\left(a,u\right), \left(a,u\right)\in S\times T$, is a cancellative affine structure on the group $H\bowtie_{\alpha,\beta} K$ if and only if
the identities in \eqref{eq:II} and \eqref{eq:III}
are satisfied. 
\end{cor}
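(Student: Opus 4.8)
The plan is to derive this corollary as a direct specialization of \cref{th:Zappa_prod}, exploiting the remark that immediately precedes it: when the affine structures $\sigma^S$ and $\sigma^T$ are cancellative, we have $\sigma_0 = \id_S$ and $\sigma_0 = \id_T$ (since any affine structure with image in $\Sym$ satisfies $\sigma_0 = \sigma_0^2$ in the group $\Sym$, forcing $\sigma_0 = \id$). Consequently the two identities in \eqref{eq:I} reduce to $\alpha_u = \alpha_u$ and $\beta_a = \beta_a$, which hold trivially. Thus the hypotheses of \cref{th:Zappa_prod} collapse to exactly \eqref{eq:II} and \eqref{eq:III}, and in that case \cref{th:Zappa_prod} already tells us that $\sigma = \sigma^S\times\sigma^T$ is an affine structure on $H\bowtie_{\alpha,\beta}K$; moreover, since $\sigma_{(a,u)} = \sigma_a\times\sigma_u$ is a product of bijections it lies in $\Sym_{S\times T}$, so the affine structure is cancellative. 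This gives the ``if'' direction.

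For the converse (``only if''), I would assume $\sigma = \sigma^S\times\sigma^T$ is a cancellative affine structure on $H\bowtie_{\alpha,\beta}K$ and extract \eqref{eq:II} and \eqref{eq:III} from the two defining properties of an affine structure. From the anti-homomorphism property $\sigma_{(a,u)\circ(b,v)} = \sigma_{(b,v)}\sigma_{(a,u)}$, computing the first coordinate of the left side using the group law \eqref{eq:prod-alfa_beta} on $H\bowtie_{\alpha,\beta}K$ yields $\sigma_{a\circ\alpha_{\bar u}(b)} = \sigma_b\sigma_a$, while the right side gives $\sigma_b\sigma_a$ directly; comparing, and using that $\sigma^S$ is itself an anti-homomorphism (so $\sigma_{a\circ\alpha_{\bar u}(b)} = \sigma_{\alpha_{\bar u}(b)}\sigma_a$ and $\sigma_a$ is invertible), we get $\sigma_{\alpha_{\bar u}(b)} = \sigma_b$ for all $b\in S$ and $u\in T$; as $u$ ranges over $T$, $\bar u = \beta^{-1}_a(u)$ ranges over all of $T$, so this is precisely the first identity in \eqref{eq:II}, and the second follows symmetrically from the second coordinate. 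Then, with \eqref{eq:II} in hand, I would revisit the associativity identity \eqref{eq:associative} for $\sigma$ as computed in the proof of \cref{th:Zappa_prod}: the equality of the first coordinates of $X$ and $Y$ there was shown to be \emph{equivalent} (given \eqref{eq:I}, now automatic, and \eqref{eq:II}) to the intermediate identity $\sigma_{\rho_b(a^-)^-}\alpha_{\bar v}\sigma_b = \alpha_{\overline V}\sigma_{a\circ\sigma_a(b)}$, which after cancelling the common $\sigma_b$ on the right and $\sigma_{\rho_b(a^-)^-}$-type rearrangement is exactly the first identity in \eqref{eq:III}; the second coordinate gives the second identity in \eqref{eq:III} symmetrically. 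Finally, the ``groupal'' clause: if in addition both $\sigma^S$ and $\sigma^T$ satisfy $\sigma_a(0)=0$ and $\sigma_u(0)=0$, then $\sigma_{(a,u)}(0,0) = (\sigma_a(0),\sigma_u(0)) = (0,0)$, the identity of $H\bowtie_{\alpha,\beta}K$, so $\sigma$ is groupal.

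The main obstacle I anticipate is the bookkeeping in the converse direction: one must be careful that each step in the chain of equalities inside the proof of \cref{th:Zappa_prod} is genuinely reversible in the cancellative setting (all the maps involved, $\sigma_a$, $\alpha_u$, $\beta_a$, $\sigma_u$, are now bijections, so cancellation is legitimate), and that the substitution of dummy variables ($\bar u = \beta^{-1}_a(u)$, $\bar v = \beta^{-1}_b(v)$, $\overline V = \beta^{-1}_{\sigma_a(b)}\sigma_u(v)$, $\overline B = \alpha^{-1}_{\sigma_u(v)}\sigma_a(b)$) correctly accounts for the universal quantifiers so that the identities obtained hold for \emph{all} $a,b\in S$ and $u,v\in T$ rather than for some constrained subset. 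Since all these maps are surjective, ranging the original variables over their domains makes the barred quantities range over the full domains as well, so no genuine loss occurs; but this needs to be stated cleanly. Everything else is the routine computation already carried out (in the forward direction) inside \cref{th:Zappa_prod}, simply read backwards under the cancellative hypothesis, so I would keep the written proof short, citing the remark preceding the corollary and the proof of \cref{th:Zappa_prod}.
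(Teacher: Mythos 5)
Your proposal is correct and follows essentially the route the paper intends: the ``if'' direction is exactly the remark preceding the corollary (cancellativity forces $\sigma_0=\id$, so \eqref{eq:I} is automatic and \cref{th:Zappa_prod} applies, with $\sigma_a\times\sigma_u$ bijective), and the ``only if'' direction is obtained, as you do, by reading the computation in the proof of \cref{th:Zappa_prod} backwards — the anti-homomorphism property yields \eqref{eq:II} after cancelling the bijection $\sigma_a$ and letting $\bar u=\beta_a^{-1}(u)$ range over $T$, and the coordinatewise comparison of $X$ and $Y$ then yields \eqref{eq:III} since all maps involved are invertible. Your handling of the quantifiers and of the reversibility of each step is sound, so no gap remains.
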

\medskip


Now, note that if $B_\sigma$ is a semi-brace obtained through the affine structure constructed by \cref{th:Zappa_prod}, then
\begin{align*}
    \left(a,u\right) + \left(b,v\right)
    = \left(
    a\circ\alpha_{\bar{u}}\sigma_a\left(b\right),\,
    u\circ\beta_{\bar{a}}\sigma_u\left(v\right)
    \right),
\end{align*}
for all $\left(a,u\right), \left(b,v\right)\in S\times T$.
In general, it is obvious that such a sum is not the direct sum of the additive structure of $H$ and $K$ as it is in \cref{th:matched-old}.\\
We also have that the restrictions of $+$ and $\circ$ to the set
$S^{\ast} = \{\left(a,0\right) \ | \ a\in S\}$ are exactly component-wise. 
Indeed, this is a consequence of the fact that $\sigma_0\left(0\right) = 0$,
$\alpha_u\left(0\right)
= 0\circ\alpha_{\beta^{-1}_0\left(u\right)}\left(0\right)     = \alpha_u\left(\alpha_u^{-1}\left(0\right)\circ 0\right)
        = 0$,  and, similarly, $\beta_a\left(0\right)=0$.
Analogously, the restriction of $+$ and $\circ$ to the set
$T^{\ast} = \{\left(0,u\right) \ | \ u\in T\}$ are exactly component-wise.
\medskip

\begin{rems}\label{rem:confronto}
Let $H:=\left(S,\circ\right)$ and $K:= \left(T,\circ \right)$ be groups, $\sigma^T$ and $\sigma^S$ cancellative affine structures on $H$ and $K$, respectively.
\begin{enumerate}
    \item If $\alpha:T \to \Sym_S$ and $\beta:S\to \Sym_T$ are group homomorphisms such that the identities \eqref{eq:mps} hold, then the conditions in \eqref{eq:mps-old} hold if and only if $\alpha_u\in\Aut\left(S,+\right)$ and $\beta\in \Aut\left(T, +\right)$. Indeed, if the conditions in \eqref{eq:mps-old} hold, since $\alpha_u\left(a\right)^- = \alpha_{\beta^{-1}_{\alpha_u\left(a\right)}\left(u\right)}\left(a^-\right)$, it follows that
\begin{align*}
    \alpha_{\beta^{-1}_{\alpha_u\left(a\right)}\left(u\right)}\lambda_{a^-}
    = \alpha_{\beta^{-1}_{\alpha_u\left(a\right)}\left(u\right)}\lambda_{\alpha^{-1}_{\beta^{-1}_{\alpha_u\left(a\right)}\left(u\right)}\alpha_u\left(a\right)^-}
    = \lambda_{\alpha_u\left(a\right)^- }
        \alpha_{\beta^{-1}_{\alpha_u\left(a\right)^- }\beta^{-1}_{\alpha_u\left(a\right)}\left(u\right)}
    = \lambda_{\alpha_u\left(a\right)^- }\alpha_u.
    \end{align*}
   Hence, we obtain that 
   \begin{align*}
       \alpha_u\left(a + b\right)
       &= \alpha_u\left(a\circ\lambda_{a^-}\left(b\right)\right)
       = \alpha_u\left(a\right)\circ\alpha_{\beta^{-1}_{\alpha_u\left(a\right)}}\lambda_{a^-}\left(b\right)
       = \alpha_u\left(a\right)\circ\lambda_{\alpha_u\left(a\right)^- }\alpha_u\left(b\right)\\
       &= \alpha_u\left(a\right) + \alpha_u\left(b\right),
   \end{align*}
   i.e., $\alpha_u\in\Aut\left(S,+\right)$. Similarly, one has that $\beta\in \Aut\left(T, +\right)$.
    The converse part is contained in \cite[Remark 5]{CCSt20-2}.
    \item If $\alpha:T \to \Sym_S$ and $\beta:S\to \Sym_T$ are group homomorphisms, if $\alpha_u\in\Aut\left(S,+\right)$ and $\beta_a\in \Aut\left(T, +\right)$, and $\lambda_a\alpha_{\bar{u}} = \alpha_u\lambda_{\bar{a}}$ and $\lambda_u\beta_{\bar{a}} = \alpha_a\lambda_{\bar{u}}$, for all $a\in S$ and $u\in T$, then the identities \eqref{eq:mps} are satisfied.
\end{enumerate}
\end{rems}
\medskip

Now, let us note that if $\left(S,T,\alpha,\beta\right)$ is a matched product system of semi-braces, then the affine structure associated to the group $\left(S\times T,\circ\right)$, by \cref{th:matched-old}, is the map $\bar{\sigma}:T\times S\to T\times S$ given by
\begin{align*}
    \bar{\sigma}_{\left(a,u\right)}\left(b,v\right) 
    = \left(\alpha^{-1}_{\bar{u}}\sigma_a\left(b\right), \, \beta^{-1}_{\bar{a}}\sigma_u\left(v\right)
    \right),
\end{align*}
for all $\left(a,u\right), \left(b,v\right)\in S\times T$. 
As shown by the examples below, one can see that,  in general, $\bar{\sigma}$ and $\sigma^S\times\sigma^T$ are not equivalent. 

\begin{ex}
Let $H = \left(S, \circ\right)$ be a group and $\sigma$ the affine structure on $H$ given by $\sigma_a =  f$, for every $a\in S$, with $f$ an idempotent endomorphism of $H$. Consider $T = S$, $\sigma_u =f$, for every $u\in T$, and  $\alpha:T\to \Sym_S$ and  $\beta: S\to \Sym_T$ the maps  given by $\alpha_u = \id_S$ and $\beta_{a}\left(u\right) = f\left(a\right)\circ u\circ f\left(a\right)^-$, for all $a\in S$ and $u \in T$, respectively. Then $\left(S,T, \alpha, \beta\right)$ is a matched product system of groups that satisfies the assumptions of \cref{th:Zappa_prod}. Then, the affine structure $\sigma_{\left(a,u\right)} = \left(f\left(b\right), f\left(v\right)\right)$ on the group $G= H\bowtie_{\alpha,\beta} K$ gives rise to the semi-brace having $G$ as multiplicative structure, namely, 
$$\left(a,u\right)\circ \left(b,v\right)
    = \left(a\circ b, \, u\circ f\left(a\right)\circ v\circ f\left(a\right)^-\right),$$
and such that the sum is given by
\begin{align*}
    \left(a,u\right) + \left(b,v\right)
    = \left(a\circ b, \, u\circ f\left(a\circ v\circ a^-\right)\right),
\end{align*}
for all $\left(a,u\right), \left(b,v\right)\in S\times T$.
It is a routine computation to check that $\alpha$ and $\beta$ satisfy also assumption of \cref{th:matched-old}, and that the sum $\oplus$ in \cref{th:matched-old} coincides with that in \cref{th:Zappa_prod} if and only if the group $\im f$ is abelian.
\end{ex}
\medskip

The following example ensures that, in general, a semi-brace obtained by \cref{th:Zappa_prod} is not isomorphic to that obtained in \cref{th:matched-old}.
\begin{ex}
Let $H=\left(S,\circ\right)$ be the cyclic group $C_6 = \langle a\rangle$ 
and $K=\left(T,\circ\right)$ the cyclic group $C_2 = \langle u\rangle$.
Let $\sigma^S$ be the groupal affine structure on the group $H$ given by $\sigma_{a^k}\left(a^l\right) = a^{\left(-1\right)^k l}$, for all $k,l\in\mathbb{Z}$, and $\sigma^T$ the abelian affine structure on the group $K$ given by $\sigma_{u^t} = \id_T$, for every $t\in\mathbb{Z}$. Consider $\alpha:T\to \Sym_S$ the map defined by $\alpha_{u^t}\left(a^k\right) = a^{\left(-1\right)^t k}$, for all $t,k\in\mathbb{Z}$, and $\beta:S\to \Sym_T$ the map such that $\beta_{a^k} = \id_T$, for every $k\in\mathbb{Z}$. 
Then, it is a routine computation to verify that $\left(S,T,\alpha,\beta\right)$ is a matched product system of groups that satisfies the hypotheses of \cref{th:Zappa_prod}. 
In particular, the groupal affine structure obtained in this way, namely, $\sigma_{\left(a^k,u^t\right)}\left(a^l,u^s\right) = \left(a^{\left(-1\right)^kl},\, u^t\right)$, determines a structure of skew brace with
\begin{align*}
    \left(a^k, u^t\right) + \left(a^l, u^s\right)
    = \left(a^{k + \left(-1\right)^{t+k} l},\, u^{t+s}\right)
    \qquad
    \left(a^k,u^t\right)\circ \left(a^l,u^s\right)
    = \left(a^{k +\left(-1\right)^t l},\, u^{t + s}\right),
\end{align*}
for all $k,l,t,s\in\mathbb{Z}$. In particular, the multiplicative group of  $B_{\sigma}$ is isomorphic to $\mathbb{D}_{12}$. Moreover, observe that if $k = 0$, $l = t = 1$, and $l = 2$, then $a^{k + \left(-1\right)^{t+k} h} = a^{-2} = a^{4}$, instead
$a^{l + \left(-1\right)^{s + l} k} = a^2$, hence the additive group of $B_{\sigma}$ is not abelian.
Finally, by $1.$ in \cref{rem:confronto}, the maps $\alpha$ and $\beta$ satisfy the hypotheses of \cref{th:matched-old}, hence we obtain a skew brace with multiplicative group equal to that of $B_\sigma$ and abelian additive structure that is the direct product of $C_6$ and $C_2$. 
\end{ex}
%

\bigskip

\bibliography{bibliography}

\end{document}